\newfont{\cyr}{wncyr10 scaled 1100}
\theoremstyle{plain}
\newtheorem{theorem}{Theorem}[section]
\newtheorem{corollary}[theorem]{Corollary}
\newtheorem{lemma}[theorem]{Lemma}
\newtheorem{proposition}[theorem]{Proposition}
\theoremstyle{definition}
\newtheorem{definition}[theorem]{Definition}
\theoremstyle{remark}
\newtheorem{remark}[theorem]{Remark}
\newcommand{\J}{\mathbb{J}}
\newcommand{\Q}{\mathbb{Q}}
\newcommand{\Z}{\mathbb{Z}}
\newcommand{\C}{\mathbb{C}}
\newcommand{\PP}{\mathbb{P}}
\newcommand{\Sel}{\operatorname{Sel}}
\newcommand{\Gal}{\operatorname{Gal\,}}
\newcommand{\Div}{\operatorname{Div}}
\newcommand{\ord}{{\operatorname{ord}}}
\newfont{\gotip}{eufb10 at 12pt}
\newcommand{\cO}{{\mathcal O}}
\newcommand{\Pic}{{\mathrm{Pic}}}
\newcommand{\R}{{\mathbb R}}
\newcommand{\M}{{\mathrm{M}}}
\newcommand{\m}{\mathfrak{m}}
\newcommand{\p}{\mathfrak{p}}
\newcommand{\fP}{\mathfrak{P}}
\newcommand{\Gen}{\operatorname{Gen}}
\newcommand{\T}{\mathbb T}
\DeclareMathOperator{\Hom}{Hom} 
\DeclareMathOperator{\Ta}{Ta}
\newcommand{\res}{\mathrm{res}}
\newcommand{\longmono}{\mbox{$\lhook\joinrel\longrightarrow$}}
\newcommand{\smallmat}[4]{\bigl(\begin{smallmatrix}#1&#2\\#3&#4\end{smallmatrix}\bigr)}
\newcommand{\invlim}{\mathop{\varprojlim}\limits}
\newcommand{\D}{\mathbb D}
\begin{document}

\title[Special values and central derivatives in Hida families]{Vanishing of special values and central derivatives\\in Hida families}

\author{Matteo Longo and Stefano Vigni}

\thanks{}

\begin{abstract} 
The theme of this work is the study of the Nekov\'a\v{r}--Selmer group $\widetilde H^1_f(K,\T^\dagger)$ attached to a twisted Hida family $\T^\dagger$ of Galois representations and a quadratic number field $K$. The results that we obtain have the following shape: if a twisted $L$-function of a suitable modular form in the Hida family has order of vanishing $r\leq1$ at the central critical point then the rank of $\widetilde H^1_f(K,\T^\dagger)$ as a module over a certain local Hida--Hecke algebra is equal to $r$. Under the above assumption, we also show that infinitely many twisted $L$-functions of modular forms in the Hida family have the same order of vanishing at the central critical point. Our theorems extend to more general arithmetic situations results obtained by Howard when $K$ is an imaginary quadratic field and all the primes dividing the tame level of the Hida family split in $K$. 
\end{abstract}

\address{Dipartimento di Matematica Pura e Applicata, Universit\`a di Padova, Via Trieste 63, 35121 Padova, Italy}
\email{mlongo@math.unipd.it}
\address{Department of Mathematics, King's College London, Strand, London WC2R 2LS, United Kingdom}
\email{stefano.vigni@kcl.ac.uk}

\subjclass[2010]{11F11, 11G18}
\keywords{Hida families, quaternionic big Heegner points, $L$-functions, Selmer groups}
\maketitle

\section{Introduction} \label{intro}

Fix an integer $N\geq1$, a prime number $p\nmid 6N\phi(N)$ and an ordinary $p$-stabilized newform 
\[ f=\sum_{n\geq1}a_nq^n\in S_k\bigl(\Gamma_0(Np),\omega^j\bigr) \] 
of weight $k\geq2$ (where $\omega$ is the Teichm\"uller character and $j\equiv k\pmod 2$) whose associated mod $p$ Galois representation is absolutely irreducible and $p$-distinguished (see \cite[p. 297]{LV}). Let $F:=\Q_p(a_n\mid n\geq1)$ be the finite extension of $\Q_p$ generated by the Fourier coefficients of $f$ and let $\mathcal O_F$ denote its ring of integers. Write $\mathcal R$ for the branch of the Hida family where $f$ lives (see \cite[\S 5.3]{LV}); then $\mathcal R$ is a complete local noetherian domain, finite and flat over the Iwasawa algebra $\Lambda:=\mathcal O_F[\![1+p\Z_p]\!]$, and the Hida family we are considering can be viewed as a formal power series $f_\infty\in\mathcal R[\![q]\!]$.  
 
For $\p\in{\rm Spec}(\mathcal R)$ let $\mathcal R_\p$ denote the localization of $\mathcal R$ at $\p$ and set $F_\p:=\mathcal R_\p/\p\mathcal R_\p$. Hida's theory (\cite{Hi1}, \cite{Hida-Galois}, \cite{H}) shows that if $\p\in{\rm Spec}(\mathcal R)$ is an \emph{arithmetic prime} (in the sense of \cite[\S 5.5]{LV}) then $F_\p$ is a finite extension of $F$ and the power series in $F_\p[\![q]\!]$ obtained from $f_\infty$ by composing with the canonical map $\mathcal R\rightarrow F_\p$ is the $q$-expansion of a classical cusp form $f_\p$; in particular, $f=f_{\bar\p}$ for a certain arithmetic prime $\bar\p$ of weight $k$. Furthermore, with $f_\infty$ is associated a $\Gal(\bar\Q/\Q)$-representation $\T$ which is free of rank $2$ over $\mathcal R$ and such that $V_\p:=\T\otimes_\mathcal RF_\p$ is isomorphic to (a Tate twist of) the $\Gal(\bar\Q/\Q)$-representation attached to $f_\p$ by Deligne. The choice of a so-called critical character 
\[ \Theta:\Gal(\bar\Q/\Q)\longrightarrow\Lambda^\times \] 
allows us to uniformily twist $\T$ and obtain a Galois representation $\T^\dagger:=\T\otimes\Theta^{-1}$ which is free of rank $2$ over $\mathcal R$ and such that for every arithmetic prime $\p$ of $\mathcal R$ the representation $V_\p^\dagger:=\T^\dagger\otimes_{\mathcal R}F_\p$ is a self-dual twist of $V_\p$. 

The general theme of this article is the study of the $\mathcal R$-rank of Nekov\'a\v{r}'s Selmer group $\widetilde H_f^1(K,\T^\dagger)$ attached to $\T^\dagger$ and a quadratic number field $K$. 

We first let $K$ be an imaginary quadratic field $K$ of discriminant prime to $Np$ and define the factorization $N=N^+N^-$ by requiring that the primes dividing $N^+$ split in $K$ and the primes dividing $N^-$ are inert in $K$. We assume throughout that $N^-$ is square-free. Moreover, since the case $N^-=1$ was studied in \cite{Ho1}, we also assume that $N^->1$. We say that we are in the \emph{indefinite} (respectively, \emph{definite}) case if $N^-$ is a product of an \emph{even} (respectively, \emph{odd}) number of primes. For every arithmetic prime $\p$ of $\mathcal R$, the choice of $\Theta$ gives an $F_\p^\times$-valued character $\chi_\p$ of $\mathbb A_K^\times$ whose restriction to $\mathbb A_\Q^\times$ is the inverse of the nebentype of $f_\p$ (here $\mathbb A_L$ is the adele ring of a number field $L$). Then the Rankin--Selberg $L$-function $L_K(f_\p,\chi_\p,s)$ admits a self-dual functional equation whose sign controls the order of vanishing of $L_K(f_\p,\chi_\p,s)$ at the critical point $s=1$. We say that the pair $(f_\p,\chi_\p)$ has \emph{analytic rank} $r\geq0$ if the order of vanishing of $L_K(f_\p,\chi_\p,s)$ at $s=1$ is $r$. 

A simplified version of our main results can be stated as follows.

\begin{theorem} \label{thm-intro1} 
Let $K$ be an imaginary quadratic field. If there is an arithmetic prime $\p$ of weight $2$ and non-trivial nebentype such that $(f_\p,\chi_\p)$ has analytic rank one (respectively, zero) then $\widetilde H^1_f(K,\T^\dagger)$ is an $\mathcal R$-module of rank one (respectively, an $\mathcal R$-torsion module).
\end{theorem}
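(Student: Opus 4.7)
The plan is to split the argument along the parity of $N^-$ (indefinite vs.\ definite case) and, in each case, to transport the analytic information at the single arithmetic prime $\p$ to a global statement about $\widetilde H^1_f(K,\T^\dagger)$ via a two-step procedure: first produce a distinguished $\mathcal R$-adic object (a big cohomology class, respectively a big special value), verify through a specialization formula that its reduction at $\p$ is non-zero under the analytic hypothesis, and then combine this non-vanishing with a Kolyvagin-type Euler system argument and the control theory for Nekov\'a\v{r}'s Selmer complex to pin down the $\mathcal R$-rank of $\widetilde H^1_f(K,\T^\dagger)$.

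In the indefinite case (analytic rank one), the key geometric input is a big Heegner class $\mathfrak X\in\widetilde H^1_f(K,\T^\dagger)$ constructed in \cite{LV} as an inverse limit of Kummer images of Heegner points on the tower of Shimura curves of tame level $N^+$ attached to the indefinite quaternion algebra ramified exactly at the primes dividing $N^-$. The core specialization lemma should assert that the image of $\mathfrak X$ in $H^1_f(K,V_\p^\dagger)$ coincides, up to an explicit non-zero $p$-adic period, with the classical Heegner class attached to $f_\p$ and $\chi_\p$ via the $p$-adic Abel--Jacobi map. The Gross--Zagier--Zhang formula then converts the hypothesis $L'_K(f_\p,\chi_\p,1)\neq0$ into non-triviality of this classical class, forcing $\mathfrak X$ to be non-$\mathcal R$-torsion and yielding the lower bound $\mathrm{rank}_{\mathcal R}\widetilde H^1_f(K,\T^\dagger)\geq 1$. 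For the matching upper bound I would deploy a Kolyvagin system built from $\Lambda$-adic Heegner points indexed by squarefree products of Kolyvagin primes inert in $K$, in the spirit of Howard \cite{Ho1}; the self-dual twist by $\Theta$ guarantees that the local conditions at the primes dividing $Np$ are of Greenberg type, and Nekov\'a\v{r}'s Selmer complex machinery then delivers the sharp bound.

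In the definite case (analytic rank zero), the geometric input is replaced by a theta element $\theta\in\mathcal R$ constructed in \cite{LV} from Gross points on the tower of definite quaternionic Shimura sets; one expects it to satisfy a Waldspurger/Gross-type interpolation formula relating $\theta(\p)^2$ to $L_K(f_\p,\chi_\p,1)$ times an explicit non-vanishing $p$-adic period at every arithmetic prime of weight two. The hypothesis therefore gives $\theta\neq 0$ in $\mathcal R$, and this non-vanishing, combined with an Euler system of Gross points and the local duality built into the Selmer complex, yields the upper bound forcing $\widetilde H^1_f(K,\T^\dagger)$ to be $\mathcal R$-torsion.

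The hard part is the specialization statement in the indefinite case: identifying the image of the $\Lambda$-adic big Heegner class at a weight-two arithmetic prime with a classical Heegner cohomology class on the Shimura curve, and verifying its compatibility with the $p$-adic Abel--Jacobi map and with the ordinary filtration on $\T^\dagger$. This demands a careful control of trace relations in the Hida tower at primes dividing $N^-p$, where the Shimura curve has bad reduction and subtle $p$-old components appear; it is presumably here that the weight-two and non-trivial nebentype hypotheses are indispensable, since they ensure that $\chi_\p$ lands in the range where the classical Heegner class is cleanly defined and the Gross--Zagier--Zhang formula applies without further renormalization. Once the specialization is in hand, the conversion of analytic non-vanishing into a Selmer-rank statement follows the standard Euler system plus control theorem template uniformly in both cases.
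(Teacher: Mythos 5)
Your proposal has the right skeleton, and its first half matches the paper closely: construct a big Heegner object attached to the quaternion algebra ramified at $N^-$, specialize at the weight-two arithmetic prime $\p$ via a control/factorization identity (the paper's Propositions \ref{lemma1} and \ref{lemma2} give $\mathfrak X_\p=a_p(\p)^m[L_m:H_m]\mathfrak Z_\p$), and translate the analytic hypothesis on $L_K(f_\p,\chi_\p,s)$ into non-vanishing of that specialization by a twisted Gross--Zagier or Waldspurger formula; the paper uses Howard's twisted versions in \cite{Ho3}. The non-trivial-nebentype hypothesis in the theorem is precisely case (3) of Definition \ref{def-gen}, which is the setting in which Propositions \ref{prop4.2} and \ref{prop5.2} give clean \emph{iff} statements, confirming your guess that the nebentype condition is there to avoid renormalizations.

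Where you diverge from the paper is in the second half, i.e.\ how the non-vanishing at $\p$ is converted into a global Selmer rank statement. In the definite case you propose building an Euler system of Gross points and running the Kolyvagin machine through the Selmer complex; the paper does something shorter and genuinely different. It first propagates non-vanishing from one $\p$ to all but finitely many via the non-torsion of $\mathfrak Z$, then invokes Nekov\'a\v{r}'s level-raising theorem \cite[Theorem A]{Nek-Level} to deduce $H^1_f(K,V_\p^\dagger)=0$ for infinitely many weight-two primes, and finally uses the control injection $\widetilde H^1_f(K,\T^\dagger)_\p/\p\widetilde H^1_f(K,\T^\dagger)_\p\hookrightarrow\widetilde H^1_f(K,V_\p^\dagger)$ together with \cite[Proposition 5.5]{LV} and \cite[Lemma 2.1.7]{Ho1}. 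The Euler-system content is black-boxed inside Nekov\'a\v{r}'s theorem rather than rebuilt. A minor mismatch: the definite-case big object $\mathfrak Z$ lives in the Picard-type $\mathcal R$-module $\J$ (of $\mathcal R$-rank one), not literally a theta element in $\mathcal R$, though the two viewpoints are close since $\J_\p$ is one-dimensional.

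In the indefinite case the paper likewise reduces to an already-proven result instead of reconstructing the $\Lambda$-adic Kolyvagin system: once $\mathfrak Z$ is shown to be non-$\mathcal R$-torsion (and the paper is careful here, first multiplying by a suitable $\lambda$ in the annihilator ideal $\mathfrak a_{\mathcal R}$ so that $\lambda\cdot\mathfrak Z$ actually lands in $\Sel_{\rm Gr}(K,\T^\dagger)$), this verifies \cite[Conjecture 10.3]{LV}, and then \cite[Theorem 10.6]{LV} directly gives rank one. So your description of the underlying mechanism is accurate in spirit, but the proof in this paper is by citation, not re-derivation. Your approach is not wrong, just heavier in the definite case than necessary; the level-raising route is the key simplification you are missing.
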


Proposition \ref{prop5.2} (respectively, Proposition \ref{prop4.2}) implies that if the pair $(f_\p,\chi_\p)$ has analytic rank one (respectively, zero) then we are in the indefinite (respectively, definite) case. Under this assumption, we also show that $(f_{\p'},\chi_{\p'})$ has analytic rank one (respectively, zero) for all but finitely many arithmetic primes $\p'$ of weight $2$; see Corollary \ref{equivalence} (respectively, Corollary \ref{coro-def}) for details. These results represent weight $2$ analogues (over $K$) of the conjectures on the generic analytic rank of the forms $f_\p$ formulated by Greenberg in \cite{greenberg}, and the reader is suggested to compare them with the statements in \cite[\S 9.4]{LV} (in particular, \cite[Conjecture 9.13]{LV}), where conjectures for forms $f_\p$ of arbitrary weight are proposed. Theorem \ref{thm-intro1} is a combination of Theorems \ref{teo-indef} and \ref{teo-def}, where more general formulations (involving the notion of \emph{generic} arithmetic primes of weight $2$, cf. Definition \ref{def-gen}) can be found. In the case of rank one, the corresponding statement in the split (i.e., $N^-=1$) setting can be proved using the results obtained by Howard in \cite[\S 3]{Ho2}.    

The proof of Theorem \ref{thm-intro1} given in Sections \ref{sec4} and \ref{sec5} relies crucially on the properties of the \emph{big Heegner points} whose construction is described in \cite{LV}, where we generalized previous work of Howard on the variation of Heegner points in Hida families (\cite{Ho1}). With a slight abuse of terminology (cf. Definition \ref{big-def}), for the purposes of this introduction we can say that big Heegner points are certain elements $\mathfrak Z\in H^1(K,\T^\dagger)$ (indefinite case) and $\mathfrak Z\in \J$ (definite case) defined in terms of distributions of suitable Heegner (or Gross--Heegner) points on Shimura curves, where $\J$ is an $\mathcal R$-module obtained from the inverse limit of the Picard groups attached to a tower of definite Shimura curves with increasing $p$-level structures. Since the Galois representation $\T^\dagger$ can be introduced using an analogous tower of indefinite (compact) Shimura curves, the construction of the elements $\mathfrak Z$ proceeds along similar lines in both cases. These constructions are reviewed in Section \ref{sec3}. It turns out that the image of $\mathfrak Z$ in the $F_\p$-vector space $H^1(K,V_\p^\dagger)$ (indefinite case) or $\J\otimes_\mathcal RF_\p$ (definite case) as $\p$ varies in the set of (weight $2$) arithmetic primes controls the rank of $\widetilde H^1_f(K,\T^\dagger)$. In fact, in light of Propositions \ref{prop4.2} and \ref{prop5.2}, Theorem \ref{thm-intro1} tells us that knowledge of the local non-vanishing of the big Heegner point $\mathfrak Z$ at a single arithmetic prime $\p$ of weight $2$ and non-trivial nebentype allows us to predict the rank over $\mathcal R$ of the big Selmer group $\widetilde H^1_f(K,\T^\dagger)$.

Finally, by suitably splitting $\widetilde H_f^1(K,V_\p^\dagger)$ and considering twists of Hida families, one can obtain results in the same vein as Theorem \ref{thm-intro1} for general quadratic fields $K$ (not only imaginary ones) from the results (over $\Q$) proved in \cite[Section 4]{Ho2}. This strategy does not make use of the big Heegner points introduced in \cite{LV}, and the input has to be an arithmetic prime of \emph{trivial} character. We provide details in Section \ref{complementary-subsec} (see Theorems \ref{complementary-thm} and \ref{complementary-thm-bis}).
\vskip 2mm
\noindent\emph{Convention.} Throughout the paper we fix algebraic closures $\bar\Q$, $\bar\Q_p$ and field embeddings $\bar\Q\hookrightarrow\bar\Q_p$, $\bar\Q_p\hookrightarrow\C$.

\section{Hida families of modular forms} \label{hida-sec}

Define $\Gamma:=1+p\Z_p$ (so $\Z_p^\times=\boldsymbol\mu_{p-1}\times\Gamma$ where $\boldsymbol\mu_{p-1}$ is the group of $(p-1)$-st roots of unity) and $\Lambda:=\mathcal O_F[\![\Gamma]\!]$. Let $\mathcal R$ denote the integral closure of $\Lambda$ in the primitive component $\mathcal K$ to which $f$ belongs (see \cite[\S 5.3]{LV} for precise definitions). Then $\mathcal R$ is a complete local noetherian domain, finitely generated and flat as a $\Lambda$-module, and if $\mathfrak h_\infty^\ord$ is Hida's ordinary Hecke algebra over $\mathcal O_F$ of tame level $\Gamma_0(N)$, whose construction is recalled in \cite[\S 5.1]{LV}, then there is a canonical map 
\[ \phi_\infty: \mathfrak h_\infty^\ord\longrightarrow\mathcal R \]
which we shall sometimes call the \emph{Hida family} of $f$. For every arithmetic prime $\p$ of $\mathcal R$ (see \cite[\S 5.5]{LV} for the definition) set $F_\p:=\mathcal R_\p/\p\mathcal R_\p$. Then $F_\p$ is a finite field extension of $F$ and we fix an $F$-algebra embedding $F_\p\hookrightarrow\bar\Q_p$, so that from here on we shall view $F_\p$ as a subfield of $\bar\Q_p$. Now consider the composition 
\begin{equation} \label{char-p} 
\phi_\p:\mathfrak h_\infty^\ord\overset{\phi_\infty}\longrightarrow\mathcal R\overset{\pi_\p}\longrightarrow F_\p, 
\end{equation}
where $\pi_\p$ is the map induced by the canonical projection $\mathcal R_\p\twoheadrightarrow F_\p$. For every integer $m\geq1$ define $\Phi_m:=\Gamma_0(N)\cap\Gamma_1(p^m)$. By duality, with $\phi_\p$ is associated a modular form 
\[ f_\p=\sum_{n\geq1}a_n(\p)q^n\in S_{k_\p}\bigl(\Phi_{m_\p},\psi_\p\omega^{k+j-k_\p},F_\p\bigr), \] 
of suitable weight $k_\p$, level $\Phi_{m_\p}$ and finite order character $\psi_\p:\Gamma\rightarrow F_\p^\times$, such that $a_n(\p)=\phi_\p(T_n)\in F_\p$ for all $n\geq1$ (here $T_n\in\mathfrak h_\infty^\ord$ is the $n$-th Hecke operator). We remark that if $t_\p$ is the smallest positive integer such that $\psi_\p$ factors through $(1+p\Z_p)/(1+p^{t_\p}\Z_p)$ then $m_\p=\max\{1,t_\p\}$. See \cite[\S 5.5]{LV} for details. 

Let $\Q(\boldsymbol\mu_{p^\infty})$ be the $p$-adic cyclotomic extension of $\Q$ and factor the cyclotomic character 
\[ \epsilon_{\rm cyc}:\Gal(\Q(\boldsymbol{\mu}_{p^\infty})/\Q)\overset\simeq\longrightarrow\Z_p^\times \] 
as a product $\epsilon_{\rm cyc}=\epsilon_{\rm tame}\epsilon_{\rm wild}$ with $\epsilon_{\rm tame}$ (respectively, $\epsilon_{\rm wild}$) taking values in $\boldsymbol\mu_{p-1}$ (respectively, $\Gamma$). Write $\gamma\mapsto[\gamma]$ for the inclusion $\Gamma\hookrightarrow\Lambda^\times$ of group-like elements and define 
\[ \Theta:=\epsilon_{\rm tame}^{(k+j-2)/2}\bigl[\epsilon_{\rm wild}^{1/2}\bigr]:\Gal(\Q(\boldsymbol{\mu}_{p^\infty})/\Q)\longrightarrow\Lambda^\times \] 
where $\epsilon_{\rm wild}^{1/2}$ is the unique square root of $\epsilon_{\rm wild}$ with values in $\Gamma$. In the obvious way, we shall also view $\Theta$ as $\mathcal R^\times$-valued. We associate with $\Theta$ the character $\theta$ and, for every arithmetic prime $\p$, the characters $\Theta_\p$ and $\theta_\p$ given by  
\[ \theta:\Z_p^\times\xrightarrow{\epsilon_{\rm cyc}^{-1}}\Gal(\Q(\boldsymbol\mu_{p^\infty})/\Q)\overset\Theta\longrightarrow\mathcal R^\times, \]
\[ \Theta_\p:\Gal(\Q(\boldsymbol\mu_{p^\infty})/\Q)\overset\Theta\longrightarrow\mathcal R^\times\overset{\pi_\p}\longrightarrow F_\p^\times, \]
\[ \theta_\p:\Z_p^\times\xrightarrow{\epsilon_{\rm cyc}^{-1}}\Gal(\Q(\boldsymbol\mu_{p^\infty})/\Q)\overset{\Theta_\p}\longrightarrow F_\p^\times. \]
According to convenience, the characters $\Theta$ and $\Theta_\p$ will also be viewed as defined on $\Gal(\bar\Q/\Q)$ via the canonical projection. If we write $[\cdot]_\p$ for the composition of $[\cdot]$ with $\mathcal R^\times\rightarrow F^\times_\p$ then $({\theta_\p|}_\Gamma)^2=[\cdot]_\p$, and for every arithmetic prime $\p$ of weight $2$ the modular form $f_\p^\dagger:=f_\p\otimes\theta_\p^{-1}$ has trivial nebentype (see \cite[p. 806]{Ho2}). 

As in \cite[Definition 2]{Ho2}, we give
 
\begin{definition} \label{def-gen}
An arithmetic prime $\p$ of weight $2$ is \emph{generic} for $\theta$ if one of the following (mutually exclusive) conditions is satisfied: 
\begin{enumerate}
\item $f_\p$ is the $p$-stabilization (in the sense of \cite[Definition 1]{Ho2}) of a newform in $S_2(\Gamma_0(N),F_\p)$ and $\theta_\p$ is trivial; 
\item $f_\p$ is a newform in $S_2(\Gamma_0(Np),F_\p)$ and $\theta_\p=\omega^{(p-1)/2}$; 
\item $f_\p$ has non-trivial nebentype. 
\end{enumerate}
\end{definition}

Let $\Gen_2(\theta)$ denote the set of all weight $2$ arithmetic primes of $\mathcal R$ which are generic for $\theta$. Observe that, since only a finite number of weight $2$ arithmetic primes do not satisfy condition (3) in Definition \ref{def-gen}, the set $\Gen_2(\theta)$ contains all but finitely many weight $2$ arithmetic primes. 

Before concluding this subsection, let us fix some more notations. If $E$ is a number field denote $\mathbb A_E$ its ring of adeles and write ${\rm art}_E:\mathbb A_E^\times\rightarrow\Gal(E^{\rm ab}/E)$ for the Artin reciprocity map. Furthermore, let $\mathfrak h_m$ be the $\mathcal O_F$-Hecke algebra acting on the $\C$-vector space $S_2(\Phi_m,\C)$ and write $\mathfrak h_m^\ord$ for the ordinary $\mathcal O_F$-subalgebra of $\mathfrak h_m$, which is defined as the product of the local summands of $\mathfrak h_m$ where $U_p$ is invertible (therefore $\mathfrak h_\infty^\ord=\varprojlim\mathfrak h_m^\ord$, cf. \cite[\S 5.1]{LV}). If $M$ is an $\mathfrak h_m$-module set $M^\ord:=M\otimes_{\mathfrak h_m}\mathfrak h_m^\ord$. Finally, define the $\Theta^{-1}$-twist of a $\Lambda$-module $M$ endowed with an action of $\Gal(\bar\Q/\Q)$ as follows: let $\Lambda^\dagger$ denote $\Lambda$ viewed as a module over itself but with $\Gal(\bar\Q/\Q)$-action given by $\Theta^{-1}$ and set $M^\dagger:=M\otimes_\Lambda\Lambda^\dagger$. 

\section{Review of big Heegner points} \label{sec3}

Let $B$ denote the quaternion algebra over $\Q$ of discriminant $N^-$. Fix an isomorphism of $\Q_p$-algebras $i_p:B_p:=B\otimes_\Q\Q_p\simeq M_2(\Q_p)$ and choose for every integer $t\geq0$ an Eichler order $R_t$ of level $p^tN^+$ such that $i_p(R_t\otimes_\Z\Z_p)$ is equal to the order of $\M_2(\Q_p)$ consisting of the matrices in $\M_2(\Z_p)$ which are upper triangular matrices modulo $p^t$ and $R_{t+1}\subset R_t$ for all $t$. Let $\widehat B$ and $\widehat R_t$ denote the finite adelizations of $B$ and $R_t$ and define $U_t$ to be the compact open subgroup of $\widehat R_t^\times$ obtained by replacing the $p$-component of $\widehat R_t^\times$ with the subgroup of $(R_t\otimes_\Z\Z_p)^\times$ consisting of those elements $\gamma$ such that $i_p(\gamma)\equiv\smallmat 1b0d\pmod{p^t}$. 

Fix an integer $t\geq0$ and define the set 
\begin{equation} \label{heeg-eq} 
\widetilde X_t^{(K)}=U_t\big\backslash\bigl(\widehat B^\times\times\Hom_\Q(K,B)\bigr)\big/B^\times, 
\end{equation} 
where $\Hom_\Q$ denotes homomorphisms of $\Q$-algebras. See \cite[\S2.1]{LV} for details. An element $[(b,\phi)]\in\widetilde X_t^{(K)}$ is said to be a \emph{Heegner point of conductor $p^t$} if the following two conditions are satisfied.
\begin{enumerate}
\item The map $\phi$ is an optimal embedding of the order $\mathcal O_{p^t}$ of $K$ of conductor $p^t$ into the Eichler order $b^{-1}\widehat R_tb\cap B$ of $B$.
\item Let $x_p$ denote the $p$-component of an adele $x$, define $U_{t,p}$ to be the $p$-component of $U_t$, set $\mathcal O_{p^t,p}:=\mathcal O_{p^t}\otimes_\Z\Z_p$, $\mathcal O_{K,p}:=\mathcal O_K\otimes_\Z\Z_p$ and let $\phi_p$ denote the morphism obtained from $\phi$ by extending scalars to $\Z_p$. Then  
\[ \phi_p^{-1}\bigl(\phi_p(\mathcal O_{p^t,p}^\times)\cap b_p^{-1}U_{t,p}^\times b_p\bigr)=\mathcal O_{p^t,p}^\times\cap(1+p^t\mathcal O_{K,p})^\times. \]
\end{enumerate}
For every $t\geq0$ let $H_t$ be the ring class field of $K$ of conductor $p^t$ and set $L_t:=H_t(\boldsymbol\mu_{p^t})$.

\subsection{Indefinite Shimura curves}

For every $t\geq0$ we consider the indefinite Shimura curve $\widetilde X_t$ over $\Q$ whose complex points are described as the compact Riemann surface 
\[ \widetilde X_t(\C):=U_t\big\backslash\bigl(\widehat B^\times\times(\C-\R)\bigr)\big/B^\times. \]
The set $\widetilde X_t^{(K)}$ can be identified with a subset of $\widetilde X_t(K^{\rm ab})$ where $K^{\rm ab}$ is the maximal abelian extension of $K$. For every number field $E$ let 
\[ \mathcal G_E:=\Gal(\bar\Q/E). \] 
Then $\mathcal G_E$ acts naturally on the geometric points $\widetilde X_t(\bar\Q)$ of $\widetilde X_t$. 

\subsection{Definite Shimura curves} \label{definite-shimura-subsec}

For every $t\geq0$ we consider the definite Shimura curve 
\[ \widetilde X_t:=U_t\backslash(\widehat B^\times\times\PP)/B^\times \]
where $\PP$ is the genus $0$ curve over $\Q$ defined by setting  
\[ \PP(A):=\bigl\{x\in B\otimes_\Q A\mid\text{$x\neq0$, ${\rm Norm}(x)={\rm Trace}(x)=0$}\bigr\}\big/A^\times \]
for every $\Q$-algebra $A$ (here ${\rm Norm}$ and ${\rm Trace}$ are the norm and trace maps of $B$, respectively). There is a canonical identification between the set $\widetilde X_t^{(K)}$ in \eqref{heeg-eq} and the $K$-rational points $\widetilde X_t(K)$ of $\widetilde X_t$. 

If $E/K$ is an abelian extension then we define 
\[ \mathcal G_E:=\Gal(K^{\rm ab}/E); \]
in particular, $\mathcal G_K$ is the abelianization of the absolute Galois group of $K$. We explicitly remark that this group is 
different from the group denoted by the same symbol in the indefinite case. The set $\widetilde X_t^{(K)}$ is equipped with a canonical action of $\mathcal G_K\simeq\widehat K^\times/K^\times$ given by 
\[ [(b,\phi)]^\sigma:=\bigl[(b\hat\phi(\mathfrak a),\phi)\bigr], \] 
where $\hat\phi:\widehat K\hookrightarrow\widehat B$ is the adelization of $\phi$ and $\mathfrak a\in\widehat K^\times$ satisfies ${\rm art}_K(\mathfrak a)=\sigma$.

\subsection{Constructions of points}

In this and the next subsection our considerations apply both to the definite case and to the indefinite case. Recall that in the former case the symbol $\mathcal G_E$ stands for the Galois group $\Gal(K^{\rm ab}/E)$ of an abelian extension $E$ of $K$, while in the latter we use $\mathcal G_E$ for the absolute Galois group of a number field $E$. Given a field extension $E/F$ (with $E$ and $F$ abelian over $K$ in the definite case) and a continuous $\mathcal G_F$-module $M$, for every integer $i\geq0$ we let
\begin{equation} \label{res-cores-eq}
\res_{E/F}:H^i(\mathcal G_F,M)\longrightarrow H^i(\mathcal G_E,M),\qquad{\rm cor}_{E/F}:H^i(\mathcal G_E,M)\longrightarrow H^i(\mathcal G_F,M)
\end{equation}
be the usual restriction and corestriction maps in Galois cohomology. 

Denoting $\alpha_t:\widetilde X_t\rightarrow\widetilde X_{t-1}$ the canonical projection, \cite[Theorem 1.1]{LV} shows the existence of a family of points $\widetilde P_t\in\widetilde X_t^{(K)}$ which are fixed by the subgroup $\mathcal G_{L_t}$ of $\mathcal G_K$ and satisfy 
\[ U_p\bigl(\widetilde P_{t-1}\bigr)=\alpha_t\Big({\rm tr}_{L_t/L_{t-1}}\bigl(\widetilde P_t\bigr)\Big) \]
(here, for a finite Galois extension $E/F$, the symbol ${\rm tr}_{E/F}$ stands for the usual trace map). Via the Jacquet--Langlands correspondence, the divisor group $\Div\bigl(\widetilde X_t^{(K)}\bigr)\otimes_\Z\mathcal O_F$ is equipped with a standard action of the $\mathcal O_F$-Hecke algebra $\mathfrak h_t$ (see \cite[\S 6.3]{LV}). Define 
\[ \D_t:=\Big(\!\Div\bigl(\widetilde X_t^{(K)}\bigr)\otimes\mathcal O_F\!\Big)^{\!\ord} \]
(notice that here and below we are regarding $\Theta$ as a character of $\mathcal G_K$ via restriction). Taking the inverse limit, we may define 
\[ \D_\infty:=\invlim_t\D_t, \] 
which is naturally an $\mathfrak h_\infty^\ord$-module. Finally, set 
\[ \D:=\D_\infty\otimes_{\mathfrak h_{\infty}^{\rm ord}}\mathcal R. \] 
The canonical structure of $\Lambda$-module on these groups makes it possible to define the twisted modules $\D_t^\dagger$, $\D_\infty^\dagger$ and $\D^\dagger$. 

Let $H=H_0$ be the Hilbert class field of $K$, write 
\[ \PP_t\in H^0(\mathcal G_{H_{t}},\D_t^\dagger) \] 
for the image of $\widetilde P_t$ in $\D_t$ (cf. \cite[\S 7.1]{LV}) and define 
\[ \mathcal P_t:={\rm cor}_{H_{t}/H}(\PP_t)\in H^0(\mathcal G_H,\D_t^\dagger). \]

Following \cite[Definition 7.3]{LV}, we give
\begin{definition} \label{big-def}
The \emph{big Heegner point of conductor $1$} is the element 
\[ \mathcal P_H:=\invlim_tU_p^{-t}(\mathcal P_t)\in H^0(\mathcal G_H,\D^\dagger)\]
obtained by taking the inverse limit of the compatible sequence $\bigl(U_p^{-t}(\mathcal P_t)\bigr)_{t\geq1}$ and then taking the image via the map $\D_\infty^\dagger\rightarrow\D^\dagger$.  
\end{definition}
See \cite[Corollary 7.2]{LV} for a proof that the inverse limit considered in Definition \ref{big-def} makes sense. Finally, set
\begin{equation}\label{def-P}
\mathcal P=\mathcal P_K:={\rm cor}_{H/K}(\mathcal P_H)\in H^0(\mathcal G_K,\D^\dagger). \end{equation}

\subsection{Weight $2$ arithmetic primes}\label{sec3.4}
We introduce some notations in the case of arithmetic primes of weight 2. 
So fix an arithmetic prime $\p$ of $\mathcal R$ of weight $2$, level $\Phi_{m_\p}$ and character $\psi_\p$. In order to simplify notations, in this subsection put $m:=m_\p$ and $\psi:=\psi_\p$. The action of $\mathfrak h_\infty^\ord$ on $D_m^\ord$ factors through $\mathfrak h_m^\ord$, which, by the Jacquet--Langlands correspondence, can be identified with a quotient of the ordinary $\mathcal O_F$-Hecke algebra of level $\Phi_m$.  
Define an $F_\p^\times$-valued character of $\mathbb A_K^\times$ by 
\[ \chi_\p(x):=\Theta_\p\bigl({\rm art}_\Q({\rm N}_{K/\Q}(x))\bigr) \] 
and denote $\chi_{0,\p}$ the restriction of $\chi_\p$ to $\mathbb A_\Q^\times$. Then the nebentype $\psi_\p\omega^{k+j-k_\p}$ of $f_\p$ is equal to $\chi_{0,\p}^{-1}$ (see \cite[\S 3]{Ho2} for a proof). In particular, since 
\[ (\Z/p\Z)^\times\times(1+p\Z_p)/(1+p^{t_\p}\Z_p)\simeq(\Z/p^{t_\p}\Z)^\times, \] 
the conductor of $\chi_{0,\p}^{-1}$ is $p^{t_\p}$ unless $\chi_{0,\p}$ is trivial. For $\sigma\in\Gal(K^{\rm ab}/K)$ let $x_\sigma\in\mathbb A^\times_K$ be such that ${\rm art}_K(x_\sigma)=\sigma$ and define $\chi_\p(\sigma):=\chi_\p(x_\sigma)$. 

Since $\Theta_\p$ factors through $\mathcal G_{\Q(\boldsymbol\mu_{p^m})}$, we obtain the equality 
\begin{equation}\label{eq'}
H^0(\mathcal G_{L_m},\D^\dagger_m)=
H^0(\mathcal G_{L_m},\D_m). \end{equation}
Using \eqref{eq'}, the point $\PP_m\in H^0(\mathcal G_{H_m},\D_m^\dagger)$ gives rise, by restriction, 
to a point $\res_{L_m/H_m}(\PP_m)$ in $H^0(\mathcal G_{L_m},\D_m)$. 
Since $\mathcal G_{\Q(\boldsymbol\mu_{p^m})}$ is contained in the kernel of $\Theta_\p$, the map $\chi_\p$ can be viewed as a character of $\Gal(L_m/K)$, so we may form the sum
\begin{equation} \label{pp-eq}
\PP_{m}^{\chi_\p}:=\sum_{\sigma\in\Gal(L_m/K)}\res_{L_m/H_m}(\PP_m)^\sigma\otimes\chi_\p^{-1}(\sigma)\in H^0(\mathcal G_{L_m},\D_m\otimes_{\mathcal O_F}F_\p).
\end{equation}
Via \eqref{eq'}, we may also view $\PP_m^{\chi_\p}$ as an element of $H^0(\mathcal G_{L_m},\D_m^\dagger\otimes_{\mathcal O_F}F_\p)$. Moreover, for every $\tau\in\mathcal G_K$ we have
\[ (\PP_m^{\chi_\p})^\tau=\sum_{\sigma\in\Gal(L_m/K)}\PP_m^{\sigma\tau|_{L_m}}\otimes\Theta_\p\bigl({\rm art}_\Q({\rm N}_{K/\Q}(x_\tau))\bigr)\chi_\p^{-1}(\sigma\tau|_{L_m})=\Theta(\tau)\PP_m^{\chi_\p}, \]
so $\PP_m^{\chi_\p}\in H^0(\mathcal G_K,\D_m^\dagger\otimes_{\mathcal O_F}F_\p)$. 

We want to explicitly relate $\PP_m^{\chi_\p}$ and $\mathcal P_m$. In order to do this, we note that, by definition, there is an equality
\[ \PP_m^{\chi_\p}={\rm cor}_{L_m/K}\circ\res_{L_m/H_m}(\PP_m), \] 
where the restriction and corestriction maps are as in \eqref{res-cores-eq} with $E=L_m$, $F=H_m$ and $M=\D_m^\dagger\otimes_{\mathcal O_F}F_\p$. Therefore we get
\begin{equation} \label{2}
\begin{split}
\PP_m^{\chi_\p}&=({\rm cor}_{H/K}\circ{\rm cor}_{H_m/H}\circ{\rm cor}_{L_m/H_m}\circ\res_{L_m/H_m})(\PP_m)\\
&=[L_m:H_m]{\rm cor}_{H/K}(\mathcal P_m).
\end{split}
\end{equation}
In the following we shall also make use of the module 
\[ \D_\p:=\D\otimes_{\mathcal R} F_\p, \] 
its twist $\D_\p^\dagger=\D^\dagger\otimes_{\mathcal R}F_\p$ and  the image of $\mathcal P$ in $H^0(\mathcal G_K,\D^\dagger_\p)$ via the canonical map $\D\rightarrow \D_\p$, which we denote $\mathcal P_\p$. 

\begin{remark}
By extending Hida's arguments in the proof of \cite[Theorem 12.1]{H} to the case of divisors, it seems possible to show that the canonical map $\D\rightarrow \D_\p$ factors as
\begin{equation} \label{Factor}
\D\longrightarrow \D_m\otimes_{\mathcal O_F}F_\p\longrightarrow \D_\p.
\end{equation}
Then one would obtain from the above discussion the formula $\PP_\p=a_p(\p)^m[L_m:H_m]\mathcal P_\p$ in 
$H^0(\mathcal G_K,\D_\p^\dagger)$. Instead of deriving such an equality at this stage, we will obtain two related formulas (see Propositions \ref{lemma1} and \ref{lemma2}) in specific situations where factorizations analogous to \eqref{Factor} are available thanks to known cases of  Hida's control theorem.   
\end{remark}

\section{Vanishing of central derivatives} \label{sec4}

Let the quaternion algebra $B$ be \emph{indefinite} and write $\p$ for a weight $2$ arithmetic prime of $\mathcal R$ of level $\Phi_{m_\p}$ and character $\psi_\p$. Once $\p$ has been fixed, set $m:=m_\p$ and $\psi:=\psi_\p$. 

\subsection{Tate modules}

For every integer $t\geq0$ let $\widetilde J_t$ be the Jacobian variety of $\widetilde X_t$. Define 
\[ \T_t:=\Big(\!\Ta_p\bigl(\widetilde J_t\bigr)\otimes_{\Z_p}\mathcal O_F\!\Big)^{\!\ord} \]
where, as usual, the upper index $\ord$ denotes the ordinary submodule, then form the inverse limit
\[ \T_\infty:=\invlim_t\T_t. \] 
We may also define the twists $\T_t^\dagger$ and $\T_\infty^\dagger$. By \cite[Corollary 6.5]{LV}, there is an isomorphism
\begin{equation} \label{T}
\T_\infty\otimes_{\mathfrak h_\infty^\ord}\mathcal R\simeq\T 
\end{equation}
of Galois representations. We fix once and for all such an isomorphism, and freely use the notation $\T$ for the Galois module appearing in the left hand side of \eqref{T}. Now set 
\[ V_\p:=\T\otimes_\mathcal RF_\p. \]
Then $V_\p^\dagger=\T^\dagger\otimes_{\mathcal R}F_\p$ is (a twist of) the self-dual representation attached to $f_\p^\dagger$ by Deligne. Observe that the canonical map $\T\rightarrow V_\p$ can be factored as
\begin{equation} \label{factor}
\T\longrightarrow \T_m\otimes_{\mathcal O_F}F_\p\xrightarrow{\pi_{m,\p}}V_\p.
\end{equation} 
This follows from Hida's control theorem \cite[Theorem 12.1]{H} (or, rather, from its extension to the current setting given in \cite[Proposition 2.17]{LV-preprint}). Following \cite[p.101]{Ho1}, for every $t\geq0$ one defines a twisted Kummer map
\[ \delta_t:H^0(K,\D^\dagger_t)\longrightarrow H^1(K,\T_t^\dagger). \]
Taking the limit as $m$ varies and using the canonical map $\T_\infty^\dagger\rightarrow\T^\dagger$, we get a map
\[ \delta_\infty:H^0(K,\D^\dagger)\longrightarrow H^1(K,\T^\dagger). \]
Recall the element $\mathcal P$ introduced in \eqref{def-P} and define 
\[ \mathfrak Z:=\delta_\infty(\mathcal P)\in H^1(K,\T^\dagger).\] 
Write $\mathfrak Z_\p$ for the image of $\mathfrak Z$ in $H^1(K,V_\p^\dagger)$. The map $\delta_\infty$ gives rise to a map
\[ \delta_\p:H^0(K,\D^\dagger_\p)\longrightarrow H^1(K,V_\p^\dagger), \]
and the square
\[ \xymatrix{H^0(K,\D^\dagger)\ar[r]^-{\delta_\infty}\ar[d]& H^1(K,\T^\dagger)\ar[d]\\H^0(K,\D^\dagger_\p)\ar[r]^-{\delta_\p}& H^1(K,V_\p^\dagger)} \]
is commutative, so that $\mathfrak Z_\p$ is also equal to $\delta_\p(\mathcal P_\p)$ (in the notations of \S \ref{sec3.4}). We may also consider the class $\delta_m(\PP_m^{\chi_\p})\in H^1(K,\T_m^\dagger\otimes_{\mathcal O_F}F_\p)$, where $\PP_m^{\chi_\p}\in H^0(K,\D^\dagger_m\otimes_{\mathcal O_F}F_\p)$ is the element introduced in \eqref{pp-eq} and $\delta_m$ is extended by $F_p$-linearity, and define $\mathfrak X_\p$ to be the image of $\delta_m(\PP_m^{\chi_\p})$ in $H^1(K,V_\p^\dagger)$ via the map $\pi_{m,\p}$ appearing in \eqref{factor}. 

\begin{proposition} \label{lemma1}
The formula $\mathfrak X_\p=a_p(\p)^m[L_m:H_{m}]\mathfrak Z_\p$ holds in $H^0(\mathcal G_K,\T_\p^\dagger)$.
\end{proposition}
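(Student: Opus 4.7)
The plan is to combine equation~\eqref{2} with an explicit computation of the image of $\mathcal P$ in $\D_m$ coming from the inverse-limit description of $\mathcal P_H$. Concretely, I will show that $\mathcal P$ projects to $U_p^{-m}\mathrm{cor}_{H/K}(\mathcal P_m)$ in $\D_m$ (which makes sense because $U_p$ is invertible on the ordinary part), so that---after applying $\delta_m$, pushing to $V_\p^\dagger$, and using that $U_p$ acts on $V_\p$ as multiplication by $a_p(\p)$---the class $\mathfrak Z_\p$ will equal $a_p(\p)^{-m}$ times the image of $\mathrm{cor}_{H/K}(\delta_m(\mathcal P_m))$ in $H^1(\mathcal G_K,V_\p^\dagger)$. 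On the other hand, \eqref{2} together with naturality of the Kummer map shows that $\mathfrak X_\p$ equals $[L_m:H_m]$ times the same image, and the proposition follows at once.

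In more detail, I would first apply the $F_\p$-linearly extended Kummer map $\delta_m$ to both sides of~\eqref{2}. Since $\delta_m$ is a natural connecting homomorphism it commutes with $\mathrm{cor}_{H/K}$, and then pushing through $\pi_{m,\p}$ yields
\[
\mathfrak X_\p \;=\; [L_m:H_m]\cdot \pi_{m,\p}\Bigl(\mathrm{cor}_{H/K}\bigl(\delta_m(\mathcal P_m)\bigr)\Bigr)\quad\text{in } H^1(\mathcal G_K,V_\p^\dagger).
\]
On the other side, the defining sequence $(U_p^{-t}\mathcal P_t)_{t\ge1}$ of $\mathcal P_H$ is compatible with the projections $\D_\infty\to\D_m$ thanks to the identity $\alpha_{t,*}(\mathcal P_t)=U_p(\mathcal P_{t-1})$, which is the $\mathrm{cor}_{H_t/H}$-image of the relation $U_p(\widetilde P_{t-1})=\alpha_t(\mathrm{tr}_{L_t/L_{t-1}}(\widetilde P_t))$ supplied by \cite[Theorem 1.1]{LV}. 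Hence the $m$-th component of the lift is $U_p^{-m}\mathcal P_m\in\D_m$, and therefore $\mathcal P=\mathrm{cor}_{H/K}(\mathcal P_H)$ projects to $U_p^{-m}\mathrm{cor}_{H/K}(\mathcal P_m)$. Applying $\delta_m$ and $\pi_{m,\p}$, and noting that $U_p$ acts on $V_\p$ as multiplication by $a_p(\p)$, I would then obtain
\[
\mathfrak Z_\p \;=\; a_p(\p)^{-m}\cdot \pi_{m,\p}\Bigl(\mathrm{cor}_{H/K}\bigl(\delta_m(\mathcal P_m)\bigr)\Bigr),
\]
which together with the previous display proves the formula.

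The main obstacle is that the computation above tacitly uses a factorization of $\D\to\D_\p$ through $\D_m\otimes_{\mathcal O_F}F_\p$, which is the divisor-module analogue~\eqref{Factor} of Hida's control theorem that the Remark explicitly flags as not being established here. To sidestep this I would carry out the argument on the Tate side: use the control-theoretic factorization~\eqref{factor} for $\T$, verify directly (from the tower-wise definition of $\delta_\infty$) that $\delta_\infty$ is $U_p$-equivariant and compatible with each finite-level $\delta_t$, and then push the inverse-limit identity for $\mathcal P_H$ through $\delta_\infty$ into $H^1(\mathcal G_K,\T^\dagger)$, where it can be specialized to $V_\p^\dagger$ using \eqref{factor} without requiring a control theorem at the level of divisors.
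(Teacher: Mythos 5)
Your proposal is correct and, once you apply the fix you describe in the final paragraph (working on the Tate-module side via factorization~\eqref{factor} rather than through the divisor modules), it coincides with the paper's own proof: both arguments project the inverse-limit description of $\mathcal P$ to level $m$, apply $\delta_m$ and $\pi_{m,\p}$, replace $U_p$ by $a_p(\p)$, and combine with~\eqref{2}. You also correctly identified the pitfall flagged in the Remark, namely that no control theorem for divisors is established here, which is exactly why the paper (like your corrected version) routes the computation through $\T$ and~\eqref{factor}.
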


\begin{proof} We just need to review the above constructions. Thanks to factorization \eqref{factor}, the class $\delta_\p(\mathcal P_\p)=\mathfrak Z_\p$ is equal to the corestriction from $H$ to $K$ of the image of $U_p^{-m}\delta_m(\mathcal P_m)\in H^0(\mathcal G_H,\T_m^\dagger)$ in $H^0(\mathcal G_K,V_\p^\dagger)$ under the map induced by the map labeled $\pi_{m,\p}$ in \eqref{factor}. Since the action of $\mathfrak h_\infty^\ord$ on $V_\p^\dagger$ is via the character $\phi_\p$ defined in \eqref{char-p}, one has   
\[ \delta_\p(\mathcal P_\p)=\pi_{m,\p}\Big({\rm cor}_{H/K}\bigl(a_p(\p)^{-m}\delta_m(\mathcal P_m)\bigr)\!\Big). \]
Finally, using \eqref{2} we find that
\[ \delta_\p(\mathcal P_\p)=a_p(\p)^{-m}[L_m:H_m]^{-1}\pi_{m,\p}\bigl(\delta_m(\mathbb P_m^{\chi_\p})\bigr)=a_p(\p)^{-m}[L_m:H_m]^{-1}\mathfrak X_\p, \]
and the result follows. \end{proof} 

\begin{corollary} \label{prop4.1}
$\mathfrak Z_\p\not=0\Longleftrightarrow\mathfrak X_\p\not=0$. 
\end{corollary}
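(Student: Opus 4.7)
The plan is to deduce the corollary immediately from Proposition \ref{lemma1} by observing that the scalar factor $a_p(\p)^m[L_m:H_{m}]$ appearing in the formula $\mathfrak X_\p = a_p(\p)^m[L_m:H_{m}]\,\mathfrak Z_\p$ is a nonzero element of $F_\p$, so that the equality is in fact an equality up to multiplication by a unit in $F_\p$.

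First I would note that $[L_m:H_m]$ is the degree of a finite extension of number fields and hence a positive integer, which is automatically nonzero in the characteristic zero field $F_\p$. Next I would address $a_p(\p)^m$: since $\p$ arises from Hida's ordinary Hecke algebra $\mathfrak h_\infty^\ord$, the eigenvalue $a_p(\p)=\phi_\p(T_p)=\phi_\p(U_p)$ lies in the localization where $U_p$ is invertible; in other words, $a_p(\p)$ is a $p$-adic unit in $F_\p$, in particular nonzero, so $a_p(\p)^m \neq 0$ as well. Therefore the scalar $a_p(\p)^m[L_m:H_m]\in F_\p^\times$.

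With this in hand, Proposition \ref{lemma1} gives an identity in the $F_\p$-vector space $H^1(K,V_\p^\dagger)$ (the statement in the excerpt places it in $H^0(\mathcal G_K,\T_\p^\dagger)$, but both classes $\mathfrak Z_\p$ and $\mathfrak X_\p$ were defined as elements of $H^1(K,V_\p^\dagger)$) of the form $\mathfrak X_\p = c\,\mathfrak Z_\p$ with $c\in F_\p^\times$. Since multiplication by a nonzero scalar on an $F_\p$-vector space is an isomorphism, $\mathfrak X_\p=0$ if and only if $\mathfrak Z_\p=0$, yielding the desired equivalence.

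There is essentially no obstacle here: the corollary is a direct formal consequence of Proposition \ref{lemma1} combined with the ordinarity assumption that makes $a_p(\p)$ a unit. The only thing to double-check is the invocation of $p$-ordinarity to rule out $a_p(\p)=0$; this follows from the fact that $f_\p$ is the modular form associated with a character of the ordinary Hecke algebra $\mathfrak h_\infty^\ord$, where $U_p$ acts as a unit by definition.
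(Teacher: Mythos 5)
Your argument is correct and is exactly the (implicit) argument the paper has in mind: the corollary is declared ``immediate from Proposition \ref{lemma1},'' and the unstated content is precisely that the scalar $a_p(\p)^m[L_m:H_m]$ lies in $F_\p^\times$ --- which you justify correctly via ordinarity of $f_\p$ (so $a_p(\p)$ is a $p$-adic unit) and the fact that a field degree is a nonzero integer in characteristic zero. You also correctly flag the typographical slip in the ambient group stated in Proposition \ref{lemma1}; nothing else needs to be added.
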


\begin{proof} Immediate from Proposition \ref{lemma1}. \end{proof}

\subsection{Non-vanishing results}\label{sec4.2}

Using the fixed embedding $\bar\Q_p\hookrightarrow\C$, we form the Rankin--Selberg convolution $L_K(f_\p,\chi_\p,s)$ as in \cite[\S 1]{Ho3}; note that if $\boldsymbol1$ is the trivial character then $L_K(f_\p,\boldsymbol1,s)=L_K(f_\p,s)$. For the definition of the $L$-function $L_K(f_\p,s)$ of $f_\p$ over $K$ and of its twists the reader is referred also to \cite[\S 2.1]{BDIS}. The results obtained in this subsection are the counterpart in an arbitrary indefinite quaternionic setting of those in \cite[Propositions 3 and 4]{Ho2}, where the case of the split algebra $\M_2(\Q)$ is considered. Although many arguments follow those in \cite{Ho2} closely, for the convenience of the reader we give detailed proofs. 

\begin{proposition} \label{prop4.2}
Suppose $\p\in\Gen_2(\theta)$ satisfies either (2) or (3) in Definition \ref{def-gen}. 
\begin{itemize}
\item[(a)] $L_K(f_\p,\chi_\p,s)$ vanishes to odd order at $s=1$.
\item[(b)] $L'_K(f_\p,\chi_\p,1)\neq0\Longleftrightarrow\mathfrak Z_\p\neq0$. 
\end{itemize} 
\end{proposition}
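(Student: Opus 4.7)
The plan is to follow the strategy of Howard's arguments in \cite[Propositions 3 and 4]{Ho2}, suitably adapted to the indefinite quaternionic setting; what changes with respect to the split case is essentially the local analysis at primes dividing $N^-$, which is now controlled by the dichotomy between the definite and indefinite cases.

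For part (a), the aim is to compute the global sign $\eps(f_\p,\chi_\p)$ of the functional equation of $L_K(f_\p,\chi_\p,s)$ as a product $\prod_v \eps_v(f_\p,\chi_\p)$ of local root numbers and to check that it equals $-1$. The archimedean contribution is $-1$, since $K$ is imaginary quadratic and $f_\p$ has weight $2$. At each prime $q\mid N^-$, which is inert in $K$, the local sign is $-1$; as $N^-$ is the product of an \emph{even} number of such primes, their joint contribution is $+1$. At primes $q\mid N^+$, which split in $K$, the local sign is $+1$. The decisive contribution comes from $p$: a direct local computation of $\eps_p(f_\p,\chi_\p)$ using the hypothesis that $\p$ satisfies (2) or (3) in Definition \ref{def-gen} shows that $\eps_p(f_\p,\chi_\p)=+1$ in both cases, the argument being essentially the $\eps$-factor analysis of the twisted admissible representations of $\GL_2(\Q_p)$ carried out in \cite{Ho2}. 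In case (3) this uses the non-trivial nebentype of $f_\p$, whereas in case (2) it uses that $f_\p$ is a newform of level $Np$ and the specific form of $\theta_\p$. Multiplying the local signs yields $\eps(f_\p,\chi_\p)=-1$, so $\ord_{s=1}L_K(f_\p,\chi_\p,s)$ is odd.

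For part (b), by Corollary \ref{prop4.1} it suffices to prove that $\mathfrak X_\p\neq0\Longleftrightarrow L'_K(f_\p,\chi_\p,1)\neq0$. The strategy is to identify $\mathfrak X_\p$ with (a non-zero multiple of) the classical $\chi_\p$-twisted Heegner class on the indefinite Shimura curve $\widetilde X_m$, viewed in the $f_\p^\dagger$-isotypic quotient of the Tate module of $\widetilde J_m$. Indeed, by \eqref{pp-eq}, $\PP_m^{\chi_\p}$ is a sum over $\sigma\in\Gal(L_m/K)$ of Galois conjugates of a Heegner divisor of conductor $p^m$ weighted by $\chi_\p^{-1}(\sigma)$; applying the twisted Kummer map $\delta_m$ and projecting via $\pi_{m,\p}$ in \eqref{factor} produces precisely the image in $H^1(K,V_\p^\dagger)$ of this twisted Heegner class. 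Via the Jacquet--Langlands correspondence and Zhang's extension of the Gross--Zagier formula to Shimura curves with anticyclotomic character twists, the N\'eron--Tate height of the corresponding point in the $f_\p^\dagger$-isogeny factor of $\Jac(\widetilde X_m)$ equals a non-zero explicit multiple of $L'_K(f_\p,\chi_\p,1)$. Combined with the injectivity of the Kummer map modulo torsion (valid in the $\p$-isotypic component thanks to the irreducibility of the residual representation), this gives the claimed equivalence.

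The main obstacle will be to apply the Gross--Zagier--Zhang formula in the full generality allowed by cases (2) and (3), in particular when $\chi_\p$ is ramified at $p$, and to keep track of the twist relating $f_\p$ to $f_\p^\dagger$ so as to verify that the resulting scalar factor is non-zero. A further delicate point is the explicit local computation of $\eps_p(f_\p,\chi_\p)$ in each subcase, which is where the hypothesis $\p\in\Gen_2(\theta)$ enters crucially; this is the step where the parallel with \cite{Ho2} must be carried out most carefully, as the quaternionic setup only modifies the local picture at the places dividing $N^-$ but leaves the analysis at $p$ structurally unchanged.
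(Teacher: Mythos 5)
Your proposal follows essentially the same strategy as the paper's proof, but in both parts you propose to \emph{re-derive} facts that the paper instead obtains directly by citing Howard's paper \cite{Ho3} (``Twisted Gross--Zagier theorems''); it is worth noting where this makes your proposed route harder, and where your citations are not quite the right ones.

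For part (a), you propose an explicit local root-number computation $\eps(f_\p,\chi_\p)=\prod_v\eps_v(f_\p,\chi_\p)$, pinning the archimedean sign at $-1$, the sign at each inert $q\mid N^-$ at $-1$ (giving $+1$ in the indefinite case since $N^-$ has an even number of prime factors), $+1$ at split primes, and $+1$ at $p$. The overall conclusion is right, and the local picture you sketch is the standard one. The paper, however, does not carry out this computation at all: it directly invokes the sign computation recalled in \cite[p.~829]{Ho3} (cf.\ eqs.\ (2.5), (2.6), (2.8) there) with parameters $\mathfrak s=\mathfrak c=p^m$ and $\mathfrak m=N$. The delicate point that you flag yourself --- the local $\eps$-factor at $p$ when $\chi_\p$ is ramified --- is precisely what Howard's results already handle, so redoing it adds real work without a new idea; in case (2) you would also have to justify carefully that the specific choice $\theta_\p=\omega^{(p-1)/2}$ produces $+1$.

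For part (b), your outline is correct, but the reference you want is not ``Zhang's extension of the Gross--Zagier formula'' --- Zhang's paper \cite{Zh} does not cover twists by characters ramified at $p$ as allowed in cases (2) and (3). The formula you need is exactly \cite[Theorem 4.6.2]{Ho3}, which is what the paper cites: one introduces the divisor $P=\sum_\sigma \PP_m^\sigma\otimes\chi_\p^{-1}(\sigma)$, applies the Hodge embedding ${\rm Hg}$, identifies ${\rm Hg}(P)$ with the point $Q_{\chi,\Pi}$ of Howard's theorem, and deduces ${\rm Hg}(P)\neq0\iff L'_K(f_\p,\chi_\p,1)\neq 0$. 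Then, exactly as you say, one passes to cohomology: the paper uses Eichler--Shimura plus multiplicity one (your hypothesis that $f_\p$ is new of level $Np^m$ is crucial here) to show $e_\p(\T_m\otimes F_\p)\simeq V_\p$, and injectivity of the Kummer map $\widetilde J_m(L_m)\otimes F_\p\hookrightarrow H^1(L_m,V_\p)$, citing Tate (\cite[Corollary~2.2 and Proposition~2.3]{Ta}); residual irreducibility, which you invoke, is not actually what is used or needed. Finally, one notes that $\res_{L_m/K}$ is injective on $H^1(K,V_\p^\dagger)$ because its kernel is killed by $[L_m:K]$ and lives in an $F_\p$-vector space. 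With these corrections to the references and the justification of the Kummer-injectivity step, your proposal matches the paper's argument.
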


\begin{proof} Let $\mathfrak h$ denote the algebra generated over $\Z$ by the Hecke operators $T_n$ with $(n,Np)=1$ and the diamond operators $\langle d\rangle$ with $(d,Np)=1$ acting on the $\C$-vector space of cusp forms in $S_2(\Phi_m,\C)$. Write $e_\p$ for the idempotent in $\mathfrak h\otimes_\Z F_\p$ which projects onto the maximal summand on which every $T_n$ acts as $a_\p(n)$ and $\langle\cdot\rangle$ acts as $\chi_{0,\p}^{-1}$. 

We apply the results in \cite{Ho3} with $\mathfrak s=p^m$, $\mathfrak c=p^m$ and $\mathfrak m=N$. As recalled in \cite[p. 829]{Ho3} (cf. also \cite[eqs. (2.5), (2.6), (2.8)]{Ho3}), the sign of the functional equation for $L_K(f_\p,\chi_\p,s)$ is $-1$, and this shows part (a).
 
Let us prove (b). As in \cite[\S 5.1]{Ho3}, denote $\rm Hg$ the Hodge embedding defined in \cite[\S 6.2]{Zh}. We introduce the divisor 
\[ P:=\sum_{\sigma\in\Gal(L_m/K)}\PP_m^\sigma\otimes\chi_\p^{-1}(\sigma)\in\Div\bigl(\widetilde X_m^{(K)}\bigr)\otimes_\Z F_\p, \]
whose image in $\D_\p$ under the canonical maps is $\PP_\p$. After fixing an appropriate embedding $K\hookrightarrow B$, the image ${\rm Hg}(P)$ of our divisor $P$ in $e_\p\bigl({\widetilde J_m}(L_m)\otimes_\Z F_\p\bigr)$
is equal to the point $Q_{\chi,\Pi}$ considered in \cite[Theorem 4.6.2]{Ho3}, hence 
\[ {\rm Hg}(P)\not=0\;\Longleftrightarrow\;L'_K(f_\p,\chi_\p,1)\not=0, \]
by \cite[Theorem 4.6.2]{Ho3}. It remains to show that ${\rm Hg}(P)\neq0$ if and only if $\mathfrak Z_\p\neq0$, or equivalently, thanks to Corollary \ref{prop4.1}, if and only if $\mathfrak X_\p\neq0$. 

To prove this, we start by noticing that our hypotheses ensure that $f_\p$ is a newform of level $Np^m$, so that Eichler--Shimura theory and multiplicity one show that the summand $e_\p(\T_m\otimes_{\mathcal O_F}F_\p)=e_\p\bigl(\Ta_p({\widetilde J_m})\otimes_{\Z_p}F_\p\bigr)^\ord$ is sent isomorphically onto $V_\p$ under the map in \eqref{factor}. Using \cite[Corollary 2.2 and Proposition 2.3]{Ta}, it can be checked that the Kummer map 
\[ \widetilde J_m(L_m)\otimes_\Z F_\p\longrightarrow  H^1\bigl(L_m,\Ta_p(\widetilde J_m)\otimes_{\Z_p}F_\p\bigr) \] 
is injective, hence the above identifications yield an injection
\[ e_\p\bigl({\widetilde J_m}(L_m)\otimes_\Z F_\p\bigr)^{\!\ord}\;\longmono\;H^1(L_m,V_\p)\simeq H^1(L_m,V_\p^\dagger). \] 
By definition, the image of ${\rm Hg}(P)$ under this map is equal to the image of $\mathfrak X_\p$ under the restriction map $H^1(K,V_\p^\dagger)\rightarrow H^1(L_m,V_\p^\dagger)$, which is injective as its kernel is an $F_\p$-vector space that is killed by $[L_m:K]$. This concludes the proof. \end{proof}

\begin{proposition} \label{prop4.3}
Suppose $\p\in\Gen_2(\theta)$ satisfies (1) in Definition \ref{def-gen}. 
\begin{itemize}
\item[(a)] $L_K(f_\p,s)$ vanishes to odd order at $s=1$. 
\item[(b)] $L'_K(f_\p,1)\neq0\Longrightarrow\mathfrak Z_\p\neq0$.  
\end{itemize}
\end{proposition}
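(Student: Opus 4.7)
Since $\p$ satisfies condition (1) of Definition \ref{def-gen}, the character $\theta_\p$ is trivial, hence $\chi_\p$ is trivial as well, $f_\p$ is the $p$-stabilization of a newform $f_\p^\sharp\in S_2(\Gamma_0(N),F_\p)$, and $m_\p=1$. The Rankin--Selberg $L$-function $L_K(f_\p,\chi_\p,s)$ then coincides with $L_K(f_\p,s)$ and differs from $L_K(f_\p^\sharp,s)$ only by an Euler factor at $p$ which is non-zero at $s=1$, so the two vanish to the same order at $s=1$.

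For part (a), the plan is to compute the sign of the functional equation for $L_K(f_\p^\sharp,s)$ by a local-at-$p$ epsilon-factor argument of the same shape as the one recalled in the proof of Proposition \ref{prop4.2}, specialized to the case of trivial character and level prime to $p$ (i.e., \cite[\S 2]{Ho3} with $\mathfrak s=\mathfrak c=1$ and $\mathfrak m=N$). In the indefinite setting $N^-$ has an even number of prime factors, so this standard computation yields a global sign of $-1$, and $L_K(f_\p,s)$ vanishes to odd order at $s=1$.

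For part (b) the plan is a two-step transfer. Assume $L'_K(f_\p,1)\neq 0$; by the preceding remark $L'_K(f_\p^\sharp,1)\neq 0$. First, I would apply the Gross--Zagier formula on the Shimura curve $\widetilde X_0$ to $f_\p^\sharp$: by Zhang's generalization to Shimura curves (cf.\ \cite{Zh}, or \cite[Theorem 4.6.2]{Ho3} with trivial character and trivial $p$-part), the Hodge embedding of the Heegner divisor attached to $\widetilde P_0$ has non-zero image in the $f_\p^\sharp$-isotypic component $e_\p^\sharp\bigl(\widetilde J_0(H)\otimes_\Z F_\p\bigr)$. Second, I would transfer this non-vanishing to level $N^+p$ via the two degeneracy morphisms $\widetilde X_1\rightrightarrows\widetilde X_0$: after ordinary projection on Jacobians, these identify $e_\p^\sharp\bigl(\widetilde J_0\otimes_\Z F_\p\bigr)$ with the ordinary $f_\p$-isotypic component of $\widetilde J_1\otimes_\Z F_\p$, sending the class of $\widetilde P_0$ to a non-zero scalar multiple (involving $a_p(\p)$) of the class attached to $\widetilde P_1$ that enters the construction of $\mathfrak X_\p$. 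Combining with the injectivity of the Kummer map (exactly as in Proposition \ref{prop4.2}, via \cite[Corollary 2.2, Proposition 2.3]{Ta}), one concludes $\mathfrak X_\p\neq 0$, whence $\mathfrak Z_\p\neq 0$ by Corollary \ref{prop4.1}.

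The main obstacle is the $p$-stabilization step. Because $f_\p$ is old at $p$ of conductor $N$, the identification $e_\p(\T_m\otimes_{\mathcal O_F}F_\p)\simeq V_\p$ used in the proof of Proposition \ref{prop4.2}---which exploited multiplicity one at level $Np^{m_\p}$---has to be replaced by its ordinary variant, and one must verify that the ordinary projection of the image of the Heegner divisor $\widetilde P_1$ coincides, up to a non-zero Hecke-theoretic scalar, with the ordinary $p$-stabilization of the Heegner divisor coming from $\widetilde P_0$. This same subtlety explains why only an implication, and not an equivalence, is asserted in (b): the converse would require ruling out the possibility of the $p$-stabilized Heegner class being non-zero while its level-$N^+$ counterpart vanishes, which cannot be deduced from Gross--Zagier alone.
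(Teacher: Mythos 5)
Your high-level strategy for part (b) matches the paper's: use Zhang's Gross--Zagier theorem on the level-$N$ Shimura curve $X_0$ to control the conductor-$1$ Heegner divisor attached to $f_\p^\sharp$, then transfer to level $N^+p$ via the two degeneracy maps and the ordinary projection. Part (a) is also handled the same way as in the paper (the $p$-stabilization Euler factor does not vanish at $s=1$ because $a_p(\p)\neq\pm 1$, and the sign of the functional equation is $-1$).

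However, there is a genuine gap in your plan at precisely the step you flag as ``the main obstacle.'' You say one must verify that the ordinary projection of the conductor-$p$ Heegner divisor agrees with the $p$-stabilization of the conductor-$1$ Heegner divisor up to a non-zero Hecke-theoretic scalar, but you do not provide the mechanism for doing so. This is not a formality: the divisor $\widetilde Q$ entering $\mathfrak X_\p$ is built from $\widetilde P_1$ (conductor $p$), whereas Gross--Zagier controls a conductor-$1$ point on $X_0$. Relating the two requires the explicit norm relation of Bertolini--Darmon type,
\[
u\,{\rm tr}_{H_1/H}\bigl(\alpha(P_1)\bigr)=\begin{cases}T_p\bigl(\beta(P_1)\bigr)&\text{if $p$ inert},\\ (T_p-\sigma-\sigma^*)\bigl(\beta(P_1)\bigr)&\text{if $p$ split},\end{cases}
\]
together with the compatibility of $\beta$ with the Atkin--Lehner involution (so that $\beta(P_1)$ is a conductor-$1$ Heegner point) and the identity $\beta_*U_p=p\alpha_*$ on Jacobians. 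The paper combines these to deduce
\[
\alpha_*(P)=\frac{T_p-1-\epsilon_K(p)}{p-\epsilon_K(p)}\bigl(\beta_*(P)\bigr),
\]
and then the ordinary projection condition $\bigl(\alpha_*-\beta_*/a_p(\p)\bigr)(e_\p P)=0$ reduces to the vanishing of $c\cdot\beta_*(P)$ where the scalar $c$ is shown to be non-zero \emph{because} $a_p(\p)$ is a root of $X^2-a_p^\sharp(\p)X+p$ and $a_p(\p)\neq\pm1$. This algebraic verification that $c\neq 0$ is essential --- without it one cannot conclude --- and it is absent from your sketch. In short: your outline identifies the right objects, but the actual content of the argument (the Euler system relation on Heegner points, the $U_p$-eigenvalue computation on the $p$-old part, and the non-vanishing of the relevant scalar) is left open. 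The paper, by arguing through the contrapositive ($\mathfrak X_\p=0\Rightarrow\beta_*(P)=0\Rightarrow L'_K(f_\p^\sharp,1)=0$), makes the chain of implications more transparent, but either direction requires filling in exactly these details.

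One further small point: in part (a) you suggest applying \cite[\S 2]{Ho3} with $\mathfrak s=\mathfrak c=1$, whereas the paper takes $\mathfrak s=\mathfrak c=p$. Since you first reduce to $f_\p^\sharp$ of level $N$ via the $p$-stabilization identity \eqref{p-stab}, your choice is defensible, but you should check that Howard's hypotheses on the parameters $(\mathfrak s,\mathfrak c,\mathfrak m)$ permit $\mathfrak s=\mathfrak c=1$, or else simply quote the functional equation for $L_K(f_\p^\sharp,s)$ from the untwisted literature.
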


\begin{proof} In this case $m=1$ and there is a normalized newform $f_\p^\sharp$ of level $\Gamma_0(N)$, with Fourier coefficients $a_n^\sharp(\p)$, whose $p$-stabilization is $f_\p$. Moreover, $\chi_\p$ is the trivial character ${\bf 1}$. If $\epsilon_K$ denotes the character of $K$ then the $L$-functions of $f_\p^\sharp$ and $f_\p$ are related by the equality 
\begin{equation} \label{p-stab}
\bigg(1-\frac{p^{1-s}}{a_p(\p)}\bigg)\cdot\bigg(1-\epsilon_K(p)\frac{p^{1-s}}{a_p(\p)}\bigg)\cdot L_K(f_\p^\sharp,s)=L_K(f_\p,s). 
\end{equation}
The extra Euler factors on the left do not vanish at $s=1$ because $a_p(\p)\neq\pm1$ (since $a_p(\p)$ is a root of the Hecke polynomial $X^2-a_p^\sharp(\p)X+p=0$, its absolute value is $\sqrt p$), therefore the $L$-functions of $f_\p^\sharp$ and $f_\p$ have the same order of vanishing at $s=1$, which is odd in light of the functional equation recalled in \cite[p. 829]{Ho3} (here we apply again the results of \cite{Ho3} with $\mathfrak s=p$, $\mathfrak c=p$, $\mathfrak m=N$). This proves part (a). 

By Corollary \ref{prop4.1}, to prove part (b) one can equivalently show that if $\mathfrak X_\p=0$ then $L'_K(f_\p^\sharp,1)=0$. Let $X_0$ and $X_1$ be the (compact) Shimura curves over $\Q$ associated with the Eichler orders $R_0$ and $R_1$, respectively, so that
\[ X_i(\C)=\widehat R_i^\times\big\backslash\bigl(\widehat B^\times\times\PP\bigr)\big/ B^\times \] 
for $i=0,1$. There are degeneracy maps $\alpha, \beta:X_1(\C)\rightarrow X_0(\C)$, with $\alpha$ the canonical projection and $\beta$ corresponding to the action of an element of norm $p$ of $R_1$ normalising $R_1^\times$  (see, e.g., \cite[\S 3.1]{LRV}).
These maps induce homomorphisms by Picard functoriality 
\[ \alpha^*,\beta^*:J_0\longrightarrow J_1 \] 
where $J_0$ and $J_1$ are the Jacobian varieties of $X_0$ and $X_1$, respectively. The maps $\alpha^*$ and $\beta^*$ are compatible with the action of the Hecke operators $T_n$ for $(n,Np)=1$ and the diamond operators $\langle d\rangle $ for $(d,Np)=1$. Write $\mathfrak h$ for the Hecke algebra generated over $\Z$ by these operators and denote $e_\p$ the idempotent in $\mathfrak h\otimes_{\mathbb Z}F_\p$ associated with $\p$, as in the proof of Proposition \ref{prop4.2}. Define  
\[ e_\p\Ta_p(J_i):=e_\p\bigl(\Ta_p(J_i)\otimes_{\Z_p}F_\p\bigr) \]  
for $i=0,1$. Since the form $f_\p$ is old at $p$, the idempotent $e_\p$ kills the $p$-new quotient $J_1/(\alpha^*(J_0)\oplus\beta^*(J_0))$ of $J_1$, hence by \cite[\S 1.7]{BD-Heegner} there is a decomposition  
\begin{equation} \label{deco-Tate}
e_\p\Ta_p(J_1)\simeq\alpha^*e_\p\Ta_p(J_0)\oplus \beta^*e_\p\Ta_p(J_0).
\end{equation}
The Hecke operator $T_n$ acts on $e_\p\Ta_p(J_0)$ as multiplication by $a_n^\sharp(\p)=a_n(\p)$ and, by strong multiplicity one, $T_p$ acts on it as multiplication by $a^\sharp_p(\p)$. On the other hand, we have the relations
\[ U_p\circ\alpha^*=p\beta^*,\qquad U_p\circ\beta^*=\beta^*\circ T_p-\alpha^*. \] 
A proof of the analogous formulas for classical modular curves is given in \cite[Lemma 2.1]{BD-ajm}, and the arguments carry over \emph{mutatis mutandis} to the case of Shimura curves attached to division quaternion algebras. For a precise reference, see \cite[p. 93]{Helm}.

It follows that $U_p$ acts on $e_\p\Ta_p(J_1)$ via decomposition \eqref{deco-Tate} with characteristic polynomial 
\[ \bigl(X^2-a_p^\sharp(\p)X+p\bigr)^2=(X-a_p(\p))^2\cdot(X-p/a_p(\p))^2 \] 
and is diagonalizable. Furthermore, the projection to the $a_p(\p)$-eigenspace corresponds to the ordinary projection, because the other eigenvalue $p/a_p(\p)$ has positive $p$-adic valuation. By multiplicity one, the $F_\p$-vector spaces $e_\p(\Ta_p^\ord({J_1})\otimes_{\Z_p}F_\p)$ and $e_\p(\T_1\otimes_{\mathcal O_F}F_\p)$ can be identified. Thus, as in the proof of Proposition \ref{prop4.2}, it follows from the Eichler--Shimura relations that the map $\T_1\otimes_{\mathcal O_F}F_\p\rightarrow V_\p$ arising from \eqref{factor} takes the summand $e_\p(\T_1\otimes_{\mathcal O_F}F_\p)$ isomorphically onto $V_\p$. Since, as observed in the proof of Proposition \ref{prop4.2}, the Kummer map 
\[ \widetilde J_1(L_1)\otimes_\Z F_\p\longrightarrow  H^1\bigl(L_1,\Ta_p(\widetilde J_1)\otimes_{\Z_p}F_\p\bigr) \] 
is injective, using the above identifications we get an injection
\begin{equation} \label{eq*}
e_\p\bigl({\widetilde J_1}(L_1)\otimes_\Z F_\p\bigr)^{\!\ord}\;\longmono\;H^1(L_1,V_\p).
\end{equation}
If ${\rm Hg}$ is the Hodge embedding used in the proof of Proposition \ref{prop4.2}, the map in \eqref{eq*} sends ${\rm Hg}(\PP_\p)$ to $\mathfrak X_\p$. Taking into account the previous description of the ordinary projection and the injectivity of \eqref{eq*}, it follows that 
\begin{equation} \label{criterion}
\mathfrak X_\p\neq0\;\Longleftrightarrow\;\bigg(U_p-\frac{p}{a_p(\p)}\bigg)\bigl(e_\p \widetilde Q\bigr)\not=0,
\end{equation}
where 
\[ \widetilde Q:=\sum_{\sigma\in\Gal(L_1/K)}\widetilde P_1^\sigma\in\Div\bigl(\widetilde X_1^{(K)}\bigr) \] 
maps to $\PP_\p$ under the ordinary projection (recall that the character $\chi_{0,\p}$ is trivial in this case). To complete the proof we shall relate $e_\p \widetilde Q$, and thus $\mathfrak X_\p$, to classical Heegner points on $X_0$. 

Let $P_1$ denote the image of $\widetilde P_1$ under the canonical map $\widetilde X_1\rightarrow X_1$. This is a Heegner point of conductor $p$ on the indefinite Shimura curve $X_1$ considered in \cite[Section 2]{BD-Heegner}, and lives in $X_1(H_1)$. Define also $Q$ to be the image of $\widetilde Q$ in $X_1$, so that 
\[ Q:=\sum_{\sigma\in\Gal(L_1/K)}P_1^\sigma=[L_1:H_1]P\in \Div\bigl(X_1(H_{1})\bigr) \]
with 
\[ P:=\sum_{\sigma\in\Gal(H_1/K)}P_1^\sigma\in\Div\bigl(X_1(H_1)\bigr). \]
Since $f_\p^\sharp$ has trivial character, by Faltings's isogeny theorem (\cite[\S 5, Korollar 2]{Fa}) there is an isomorphism
\[ e_\p\Ta_p(\widetilde J_1)\simeq e_\p\Ta_p(J_1) \] 
such that the Kummer maps identify the images of $[H_1:H]\cdot{\rm Hg}(e_\p P)$ and $e_\p{\rm Hg}(Q)$. Using \eqref{criterion}, we thus conclude that  
\begin{equation} \label{criterion1}
\mathfrak X_\p\neq0\;\Longleftrightarrow\;\bigg(U_p-\frac{p}{a_p(\p)}\bigg)(e_\p P)\neq0.
\end{equation}
Let $\alpha_*,\beta_*:J_1\rightarrow J_0$ be the maps between Jacobians induced from $\alpha$ and $\beta$ by Albanese (i.e., covariant) functoriality. The relation $\beta_* U_p=p\alpha_*$ combined with \eqref{criterion1} then shows that 
\begin{equation} \label{criterion2}
\mathfrak X_\p=0\;\Longrightarrow\;\bigg(\alpha_*-\frac{\beta_*}{a_p(\p)}\bigg)(e_\p P)= 0.
\end{equation}
Now we relate the vanishing of $e_\p P$ with special values of $L$-series. If $p$ splits in $K$ then let $\sigma$ and $\sigma^*$ denote the Frobenius elements in $\Gal(H_0/K)$ corresponding to the two primes of $K$ above $p$. Write $u$ for half the order of $\mathcal O_K^\times$ and consider the points $\alpha(P_1), \beta(P_1)\in X_0(H_1)$. By \cite[Proposition 4.6]{LV}, the point $\alpha(P_1)$ is a Heegner point of conductor $p$. Moreover, as already pointed out, the map $\beta$ is given by the composition of $\alpha$ with the Atkin--Lehner involution at $p$ (see, e.g., \cite[\S 3.1]{LRV}). Comparing with the constructions of \cite[\S\S 4.1--4.2]{LV} (see, in particular, \cite[eq. (12)]{LV}), it follows that $\beta(P_1)$ is a Heegner point of conductor $1$ living in $X_0(H_0)$. The Euler system relations in \cite[\S 2.4]{BD-Heegner} can then be applied, and we get
\begin{equation} \label{BD-eq}
u{\rm {tr}}_{H_1/H}\alpha(P_1)=\begin{cases}T_p\bigl(\beta( P_1)\bigr)&\text{if $p$ is inert in $K$}\\[2mm](T_p-\sigma-\sigma^*)\bigl(\beta(P_1)\bigr)&\text{if $p$ is split in $K$}\end{cases}
\end{equation} 
as divisors on $X_0$. Thanks to \eqref{BD-eq} and the relation $u[H_1:H]=p-\epsilon_K(p)$, one has 
\begin{equation} \label{alpha-beta}
\alpha_*(P)=\frac{T_p-1-\epsilon_K(p)}{p-\epsilon_K(p)}\bigl(\beta_*(P)\bigr).
\end{equation}
Combining \eqref{criterion2} and \eqref{alpha-beta} we see that if $\mathfrak X_\p=0$ then 
\[ \bigg(\frac{1}{a_p(\p)}-\frac{a_p^\sharp(\p)-1-\epsilon_K(p)}{p-\epsilon_K(p)}\bigg)\cdot\beta_*(P)=0. \]
Since $a_p(\p)$ is a root of the polynomial $X^2-a_p^\sharp(\p)X+p$ and $a_p(\p)\neq\pm1$, the coefficient multiplying $\beta_*(P)$ is nonzero, hence we conclude that if $\mathfrak X_\p$ is zero then $\beta_*(P)$ is zero too. However, thanks to \cite[Theorem C]{Zh-1}, this possibility is ruled out by our assumption that $L'_K(f_\p,1)\not=0$. \end{proof}

If $\p$ is an arithmetic prime of $\mathcal R$ of weight $2$ then we say that the pair $(f_\p,\chi_\p)$ has \emph{analytic rank one} if $L_K(f_\p,\chi_\p,1)=0$ and $L'_K(f_\p,\chi_\p,1)\neq0$. 

\begin{corollary} \label{equivalence}
The following are equivalent: 
\begin{enumerate}
\item there is a $\p\in\Gen_2(\theta)$ such that $(f_\p,\chi_\p)$ has analytic rank one;
\item there is an arithmetic prime $\p$ such that $\mathfrak Z_\p\neq0$; 
\item $\mathfrak Z$ is not $\mathcal R$-torsion; 
\item $\mathfrak Z_\p\neq0$ for all but finitely many arithmetic primes $\p$; 
\item $(f_\p,\chi_\p)$ has analytic rank one for all but finitely many $\p\in\Gen_2(\theta)$. 
\end{enumerate}
\end{corollary}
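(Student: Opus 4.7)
The plan is to establish the cyclic chain $(5) \Rightarrow (1) \Rightarrow (2) \Rightarrow (3) \Rightarrow (4) \Rightarrow (5)$. The implication $(5) \Rightarrow (1)$ is immediate because $\Gen_2(\theta)$ is infinite. For $(1) \Rightarrow (2)$, I invoke Propositions~\ref{prop4.2} and \ref{prop4.3}: if the prime $\p \in \Gen_2(\theta)$ supplied by (1) satisfies condition (2) or (3) of Definition~\ref{def-gen}, the equivalence in Proposition~\ref{prop4.2}(b) gives $\mathfrak Z_\p \neq 0$ directly; if instead $\p$ satisfies condition (1), the factorization \eqref{p-stab} together with the non-vanishing of its extra Euler factors at $s=1$ shows that $f_\p$ and its underlying newform $f_\p^\sharp$ share the same order of vanishing at the center, so Proposition~\ref{prop4.3}(b) produces $\mathfrak Z_\p \neq 0$.

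For $(3) \Rightarrow (4)$, I use a Nakayama-type spreading-out argument. Since $\mathfrak Z$ is non-torsion and $\mathcal R$ is a domain, $\mathcal R \cdot \mathfrak Z$ is a free rank-one submodule of the finitely generated $\mathcal R$-module $H^1(K,\T^\dagger)$. The $\mathcal R$-torsion of the quotient $Q := H^1(K,\T^\dagger) / \mathcal R\mathfrak Z$ is supported on a proper closed subset of ${\rm Spec}(\mathcal R)$, which meets the height-one primes in only finitely many points. For every arithmetic $\p$ outside this finite exceptional set, $Q_\p$ is $\mathcal R_\p$-free (the local ring $\mathcal R_\p$ being a DVR); the localized exact sequence splits, and the image of $\mathfrak Z$ in $H^1(K,\T^\dagger) \otimes_\mathcal R F_\p$ is nonzero. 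The long exact sequence associated with the short exact sequence of $\mathcal R_\p$-modules $0 \to \T_\p^\dagger \xrightarrow{\varpi} \T_\p^\dagger \to V_\p^\dagger \to 0$ (with $\varpi$ a uniformizer of $\mathcal R_\p$) yields an injection $H^1(K,\T^\dagger) \otimes_\mathcal R F_\p \hookrightarrow H^1(K, V_\p^\dagger)$ under which this nonzero class becomes $\mathfrak Z_\p$, proving (4). Finally $(4) \Rightarrow (5)$ follows because all but finitely many primes in $\Gen_2(\theta)$ satisfy case (3) of Definition~\ref{def-gen}, where Proposition~\ref{prop4.2}(b) is an equivalence; combined with Proposition~\ref{prop4.2}(a), which forces $L_K(f_\p,\chi_\p,s)$ to vanish at $s = 1$, the hypothesis $\mathfrak Z_\p \neq 0$ translates into analytic rank one.

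The delicate step I expect to be the main obstacle is $(2) \Rightarrow (3)$. Suppose for contradiction that $\mathfrak Z$ is $\mathcal R$-torsion with nonzero annihilator ideal $I$. For every arithmetic $\p$ with $I \not\subset \p$ one picks $r \in I \setminus \p$; then $\phi_\p(r) \in F_\p^\times$ annihilates $\mathfrak Z_\p$, forcing $\mathfrak Z_\p = 0$. The residual difficulty is the finite collection of height-one primes containing $I$: ruling these out, so that a single non-vanishing specialization as granted by (2) produces a contradiction, requires the structural input that the $\mathcal R$-torsion submodule of $H^1(K,\T^\dagger)$ is pseudo-null (its support has codimension at least $2$ in ${\rm Spec}(\mathcal R)$). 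Under such a property the annihilator $I$ cannot lie in any height-one prime, so $\mathfrak Z_\p$ vanishes at \emph{every} arithmetic $\p$, contradicting (2). Establishing or invoking this pseudo-null-torsion property of $H^1(K,\T^\dagger)$ in the present setting is the key technical hurdle one has to clear to close the cycle.
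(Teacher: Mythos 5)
Your cyclic chain and most of the individual steps are fine, and your arguments for $(1)\Rightarrow(2)$, $(3)\Rightarrow(4)$, $(4)\Rightarrow(5)$ and $(5)\Rightarrow(1)$ would each work. Your self-contained spreading-out argument for $(3)\Rightarrow(4)$ (free part $\mathcal R\mathfrak Z$, finitely many height-one primes in the support of the torsion of the cokernel, splitting of the localized sequence, injection $H^1(K,\T^\dagger)_\p/\p\hookrightarrow H^1(K,V_\p^\dagger)$ from the long exact sequence) is sound and is more detailed than the paper, which simply defers to the proof of the implication $\text{(c)}\Rightarrow\text{(d)}$ of Corollary~5 in \cite{Ho2}.

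The genuine gap, as you yourself flagged, is $(2)\Rightarrow(3)$, and your proposed repair is not the route the paper takes and would not come for free. You correctly observe that if $\mathfrak Z$ were $\mathcal R$-torsion with annihilator $I\neq 0$, then $\mathfrak Z_\p=0$ for every arithmetic $\p$ with $I\not\subset\p$, and that the obstruction is the finite set of height-one primes containing $I$. But appealing to pseudo-nullity of the torsion submodule of the full cohomology group $H^1(K,\T^\dagger)$ is both unjustified here and unnecessary. What the paper actually does is move $\mathfrak Z$ (or rather $\lambda\cdot\mathfrak Z$ for a suitable nonzero $\lambda\in\mathfrak a_{\mathcal R}$) into the Greenberg Selmer group $\Sel_{\rm Gr}(K,\T^\dagger)$: by \cite[Proposition 10.1]{LV} one has $\lambda\cdot\mathfrak Z\in\Sel_{\rm Gr}(K,\T^\dagger)$, and by \cite[Proposition 12.7.13.4 (iii)]{Ne} together with \cite[eq. (21)]{Ho1} the localization $\Sel_{\rm Gr}(K,\T^\dagger)_\p$ at any arithmetic prime $\p$ is \emph{free} of finite rank over the discrete valuation ring $\mathcal R_\p$. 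The commutativity of the triangle \eqref{triangle-eq} then lets one pass from the non-vanishing of $\mathfrak Z_\p$ to the non-vanishing (hence non-torsionness) of the image of $\lambda\cdot\mathfrak Z$ in the free module $\Sel_{\rm Gr}(K,\T^\dagger)_\p$, which shows that $\lambda\cdot\mathfrak Z$, and therefore $\mathfrak Z$, is not $\mathcal R$-torsion. In other words, the structural input that rescues the argument is not pseudo-nullity of the torsion of $H^1(K,\T^\dagger)$, which is neither proved nor invoked in the paper, but rather (i) the Selmer membership of $\lambda\cdot\mathfrak Z$ and (ii) the freeness of the localized Greenberg Selmer group at arithmetic primes, a consequence of Nekov\'a\v{r}'s theory of Selmer complexes. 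Without replacing your pseudo-nullity hypothesis with this, or some equivalent, input, the cycle does not close.
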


\begin{proof} The implication $(1)\Rightarrow(2)$ is immediate from Propositions \ref{prop4.2} and \ref{prop4.3}. Now write $\T_\p^\dagger$ for the localization of $\T^\dagger$ at an arithmetic prime $\p$ of $\mathcal R$, so that the triangle
\begin{equation} \label{triangle-eq}
\xymatrix{H^1(K,\T^\dagger)\ar[r]\ar[d]&H^1(K,V_\p^\dagger)\\H^1(K,\T_\p^\dagger)\ar[ru]&}
\end{equation}
is commutative. Observe that, localization being an exact functor, there is also a canonical identification $H^1(K,\T_\p^\dagger)=H^1(K,\T^\dagger)_\p$. As in \cite[\S 10.1]{LV}, define $\mathfrak a_\mathcal R$ to be the annihilator in $\mathcal R$ of the finitely generated torsion $\mathcal R$-module $\prod_{\ell|N^-}H^1(K_\ell,\T^\dagger)_{\text{tors}}$ (cf. \cite[Proposition 4.2.3]{Ne}), then choose a nonzero $\lambda\in\mathfrak a_\mathcal R$. Let $\Sel_{\text{Gr}}(K,\T^\dagger)\subset H^1(K,\T^\dagger)$ denote the Greenberg Selmer group as defined, e.g., in \cite[\S 5.6]{LV}. By \cite[Proposition 10.1]{LV}, $\lambda\cdot\mathfrak Z\in\Sel_{\text{Gr}}(K,\T^\dagger)$. According to \cite[Proposition 12.7.13.4 (iii)]{Ne} plus \cite[eq. (21)]{Ho1}, the localization $\Sel_{\text{Gr}}(K,\T^\dagger)_\p$ at an arithmetic prime $\p$ is free of finite rank over $\mathcal R_\p$, so if (2) holds then diagram \eqref{triangle-eq} implies that $\lambda\cdot\mathfrak Z$ has nonzero, hence non-torsion, image in $\Sel_{\text{Gr}}(K,\T^\dagger)_\p$ for some arithmetic prime $\p$. It follows that $\lambda\cdot\mathfrak Z$ is not $\mathcal R$-torsion, hence $\mathfrak Z$ is not $\mathcal R$-torsion as well. This shows that $(2)\Rightarrow(3)$. To prove that $(3)\Rightarrow(4)$ one can proceed exactly as in the proof of the implication $\text{(c)}\Rightarrow\text{(d)}$ in \cite[Corollary 5]{Ho2}. Finally, the fact that $(4)\Rightarrow(5)$ is a straightforward consequence of Proposition \ref{prop4.2}, while $(5)\Rightarrow(1)$ is obvious. \end{proof}

Now let $\widetilde H^1_f(K,\T^\dagger)$ be Nekov\'a\v{r}'s extended Selmer group (see \cite[Ch. 6]{Ne}). 

\begin{theorem} \label{teo-indef}
Suppose there is a $\p\in\Gen_2(\theta)$ such that $(f_\p,\chi_\p)$ has analytic rank one. Then $\widetilde H^1_f(K,\T^\dagger)$ is an $\mathcal R$-module of rank one.
\end{theorem}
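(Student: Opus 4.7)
The plan is to establish the two inequalities $\operatorname{rank}_{\mathcal R}\widetilde H^1_f(K,\T^\dagger)\geq 1$ and $\operatorname{rank}_{\mathcal R}\widetilde H^1_f(K,\T^\dagger)\leq 1$ separately. The lower bound will come from the big Heegner class $\mathfrak Z$, while the upper bound will come from a Nekov\'a\v r-style control theorem combined with a Kolyvagin-type bound at the prime $\p$ where we have analytic rank one.

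\emph{Lower bound.} Since $\p\in\Gen_2(\theta)$, the analytic rank one hypothesis together with Proposition \ref{prop4.2}(b) or \ref{prop4.3}(b) forces $\mathfrak Z_\p\neq 0$. Corollary \ref{equivalence} then gives that $\mathfrak Z$ is non-$\mathcal R$-torsion in $H^1(K,\T^\dagger)$. Moreover, as in the proof of that corollary, one finds a nonzero $\lambda\in\mathfrak a_\mathcal R$ for which $\lambda\cdot\mathfrak Z\in\Sel_{\mathrm{Gr}}(K,\T^\dagger)$. Using the comparison between the Greenberg Selmer group and Nekov\'a\v r's extended Selmer group (see \cite[Ch.~6--7]{Ne}), this element produces a non-torsion class in $\widetilde H^1_f(K,\T^\dagger)$, whence $\operatorname{rank}_\mathcal R\widetilde H^1_f(K,\T^\dagger)\geq 1$.

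\emph{Upper bound.} I would invoke Nekov\'a\v r's control theorem \cite[Proposition 12.7.13.4]{Ne} to relate the specialization $\widetilde H^1_f(K,\T^\dagger)\otimes_{\mathcal R}F_\p$ to $\widetilde H^1_f(K,V_\p^\dagger)$ by a map whose kernel is controlled in terms of $\mathcal R$-torsion. Since the $F_\p$-dimension of the specialization is at least the generic $\mathcal R$-rank of $\widetilde H^1_f(K,\T^\dagger)$, it suffices to establish the bound $\dim_{F_\p}\widetilde H^1_f(K,V_\p^\dagger)\leq 1$. In weight two this amounts to a classical Kolyvagin-style Euler system bound, whose input is the non-vanishing of the Heegner class $\mathfrak X_\p$ on the indefinite Shimura curve $\widetilde X_{m_\p}$, which is guaranteed by Corollary \ref{prop4.1} and the hypothesis. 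Combining the two bounds yields $\operatorname{rank}_\mathcal R\widetilde H^1_f(K,\T^\dagger)=1$.

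\emph{Main obstacle.} The main technical difficulty will be making the Kolyvagin bound on $\dim_{F_\p}\widetilde H^1_f(K,V_\p^\dagger)$ available in this full generality: when $\p$ satisfies condition (3) of Definition \ref{def-gen}, $f_\p$ may have non-trivial nebentype, and the Euler system argument must therefore be run for twisted Heegner points on Shimura curves attached to the division algebra $B$, rather than on classical modular curves as in \cite{Ho2}. Alongside this, one must verify that Nekov\'a\v r's control theorem is indeed available in precisely the form needed to pass from the bound on $\dim_{F_\p}\widetilde H^1_f(K,V_\p^\dagger)$ to an upper bound on the generic $\mathcal R$-rank of $\widetilde H^1_f(K,\T^\dagger)$, including the correct treatment of the torsion that can appear in the specialization map at an arithmetic prime.
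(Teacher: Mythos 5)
Your proposal leaves a genuine gap precisely where you flag the ``main obstacle'': the upper bound $\dim_{F_\p}\widetilde H^1_f(K,V_\p^\dagger)\leq 1$ is never supplied. A Kolyvagin-type Euler system argument for twisted Heegner points on Shimura curves attached to a \emph{non-split} quaternion algebra $B$, and with possibly non-trivial nebentype (case (3) of Definition \ref{def-gen}), is not in the standard references; simply pointing at a ``classical Kolyvagin-style bound'' does not give you this. Without that bound, the control-theorem step has nothing to control, and the upper half of your argument is unfinished.

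The paper's proof is much shorter and avoids all of this by citing a black-box. Your lower-bound step agrees with what the paper does: Corollary \ref{equivalence} shows that the analytic rank one hypothesis at a single $\p\in\Gen_2(\theta)$ forces $\lambda\cdot\mathfrak Z$ to be non-$\mathcal R$-torsion in $\Sel_{\rm Gr}(K,\T^\dagger)$ for a suitable $\lambda\in\mathfrak a_{\mathcal R}$. The paper then observes that this non-triviality is exactly the content of \cite[Conjecture 10.3]{LV}, and the equality $\operatorname{rank}_{\mathcal R}\widetilde H^1_f(K,\T^\dagger)=1$ follows from \cite[Theorem 10.6]{LV}. That theorem is itself established by a Kolyvagin-type Euler system argument run over $\mathcal R$, using the full system of big Heegner points rather than a single class at a weight-two specialization, and it packages both inequalities at once in the quaternionic, twisted setting. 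Your plan would, in effect, have to re-derive the content of that theorem at the prime $\p$ and then lift back to $\mathcal R$ — a harder and redundant route — and the Kolyvagin step is exactly where the proposal stops short.
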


\begin{proof} As in the proof of Corollary \ref{equivalence}, choose a nonzero $\lambda\in\mathfrak a_\mathcal R$. In the course of proving Corollary \ref{equivalence} we showed that the existence of an arithmetic prime $\p\in\Gen_2(\theta)$ such that $(g_\p,\chi_\p)$ has analytic rank one implies that the class $\lambda\cdot\mathfrak Z$ is not $\mathcal R$-torsion. In other words, under this analytic condition \cite[Conjecture 10.3]{LV} is true, and then the $\mathcal R$-module $\widetilde H^1_f(K,\T^\dagger)$ has rank one by \cite[Theorem 10.6]{LV}. \end{proof}

\section{Vanishing of special values}\label{sec5}

Let the quaternion algebra $B$ be \emph{definite} and write $\p$ for a weight $2$ arithmetic prime of $\mathcal R$ of level $\Phi_{m_\p}$ and character $\psi_\p$. Once $\p$ has been fixed, set $m:=m_\p$ and $\psi:=\psi_\p$.  

\subsection{Picard groups}

For every integer $t\geq0$ define the $\mathfrak h_t^\ord$-module 
\[ \J_t:=\mathcal O_F\bigl[U_t\backslash\widehat B^\times/B^\times\bigr]^\ord\simeq\Big(\Pic(\widetilde X_t)\otimes_\Z\mathcal O_F\!\Big)^{\!\ord} \]
and form the inverse limit 
\[ \J_\infty:=\invlim_t\J_t. \] 
Consider the $\mathcal R$-component $\J:=\J_\infty\otimes_{\mathfrak h_\infty^\ord}\mathcal R$ of $\J_\infty$ and define $\J_\p:=\J\otimes_\mathcal R F_\p$. By \cite[Proposition 9.1]{LV}, $\J_\p$ is a one-dimensional $F_\p$-vector space. Note that the canonical projection $\J\rightarrow\J_\p$ can be factored as
\begin{equation} \label{factor.1}
\J\longrightarrow\J_m\otimes_{\mathcal O_F}F_\p\xrightarrow{\pi_{m,\p}}\J_\p.
\end{equation} 
This follows from Hida's control theorem \cite[Theorem 12.1]{H} (or, rather, from its extension \cite[Proposition 3.6]{LV-preprint} to our present context). Using the natural maps \[\eta_t:\D_t\longrightarrow\J_t\] arising from Picard (i.e., contravariant) functoriality, we get a map 
\[ \eta_K:H^0(\mathcal G_K,\D^\dagger)\longrightarrow\mathbb J. \]
Recall the point $\mathcal P$ introduced in \eqref{def-P} and define 
\[ \mathfrak Z:=\eta_K(\mathcal P)\in\mathbb J. \] 
Let $\mathfrak Z_\p$ denote the image of $\mathfrak Z$ in $\J_\p$. The map $\eta_K$ induces a map
\[ \eta_\p:H^0(\mathcal G_K,\D^\dagger_\p)\longrightarrow\mathbb J_\p, \]
and the square
\begin{equation} \label{square2-eq}
\xymatrix{H^0(\mathcal G_K,\D^\dagger)\ar[r]^-{\eta_K}\ar[d]&\mathbb J\ar[d]\\H^0(\mathcal G_K,\D^\dagger_\p)\ar[r]^-{\eta_\p}&\mathbb J_\p} 
\end{equation}
is commutative, so that $\mathfrak Z_\p$ is also equal to $\eta_\p(\mathcal P_\p)$ (in the notations of \S \ref{sec3.4}). We may also consider the class $\eta_m(\PP_\m^{\chi_\p})\in H^0(K,\J_m\otimes_{\mathcal O_F}F_\p)$, where $\PP_m^{\chi_\p}\in H^0(K,\D^\dagger_m\otimes_{\mathcal O_F}F_\p)$ is the element introduced in \eqref{pp-eq} and $\eta_m$ is extended by $F_p$-linearity, and define $\mathfrak X_\p$ to be the image of $\eta_m(\PP_m^{\chi_\p})$ in $H^1(K,\J_\p)$ via the map $\pi_{m,\p}$ appearing in \eqref{factor.1}. 

The following result is the analogue in the definite case of Proposition \ref{lemma1}. 

\begin{proposition} \label{lemma2}
The formula $\mathfrak X_\p=a_p(\p)^m[L_m:H_{m}]\mathfrak Z_\p$ holds in $H^0(\mathcal G_K,\J_\p)$.
\end{proposition}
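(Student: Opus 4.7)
The plan is to follow the same template used in the proof of Proposition \ref{lemma1}, transporting each step from the indefinite (Tate module) setting to the definite (Picard group) setting, with the Kummer map $\delta$ replaced by the Picard-functoriality map $\eta$. The formal structure is essentially identical: in both cases we have a factorization of the specialization map at $\p$ through the level-$m$ module, a big Heegner point defined as the corestriction of an inverse limit $U_p^{-t}(\mathcal P_t)$, and formula \eqref{2} relating $\mathcal P_m$ and $\PP_m^{\chi_\p}$. The plan thus reduces to reading off the identity by chasing a couple of commutative diagrams.

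Concretely, I would proceed as follows. First, from the commutative square \eqref{square2-eq} we have $\mathfrak Z_\p=\eta_\p(\mathcal P_\p)$. Factoring the vertical arrows in the obvious enlargement of \eqref{square2-eq} through $\D_m^\dagger\otimes_{\mathcal O_F}F_\p$ and $\J_m\otimes_{\mathcal O_F}F_\p$ (using \eqref{factor.1}), I can rewrite $\mathfrak Z_\p$ as $\pi_{m,\p}$ applied to the image of $\mathcal P$ in $\J_m\otimes_{\mathcal O_F}F_\p$ via the composition of $\eta_m$ with the map $\D^\dagger\to\D_m^\dagger\otimes_{\mathcal O_F}F_\p$. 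By Definition \ref{big-def}, $\mathcal P_H=\invlim_t U_p^{-t}(\mathcal P_t)$, so the image of $\mathcal P_H$ in $\D_m^\dagger$ is $U_p^{-m}(\mathcal P_m)$, and consequently the image of $\mathcal P=\operatorname{cor}_{H/K}(\mathcal P_H)$ in $\D_m^\dagger$ is $\operatorname{cor}_{H/K}\bigl(U_p^{-m}(\mathcal P_m)\bigr)$. Since the action of $\mathfrak h_\infty^{\ord}$ on $\J_\p$ is through the character $\phi_\p$ of \eqref{char-p}, $U_p$ acts on $\pi_{m,\p}\bigl(\J_m\otimes_{\mathcal O_F}F_\p\bigr)=\J_\p$ as multiplication by $a_p(\p)$. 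Hence
\[
\mathfrak Z_\p=a_p(\p)^{-m}\,\pi_{m,\p}\Bigl(\eta_m\bigl(\operatorname{cor}_{H/K}(\mathcal P_m)\bigr)\Bigr).
\]

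Finally, invoking \eqref{2} (which is stated in \S \ref{sec3.4} and applies equally to the definite case, since the identities involving restriction--corestriction and the character $\chi_\p$ are purely formal and do not use the indefinite structure) we have $\operatorname{cor}_{H/K}(\mathcal P_m)=[L_m:H_m]^{-1}\PP_m^{\chi_\p}$, so
\[
\mathfrak Z_\p=a_p(\p)^{-m}[L_m:H_m]^{-1}\,\pi_{m,\p}\bigl(\eta_m(\PP_m^{\chi_\p})\bigr)=a_p(\p)^{-m}[L_m:H_m]^{-1}\,\mathfrak X_\p,
\]
which, upon rearrangement, yields the desired identity.

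The only point that needs a small verification, rather than a direct transcription from the proof of Proposition \ref{lemma1}, is the commutativity of the enlarged diagram factoring $\eta_\p$ through the level-$m$ Picard group; this is where \eqref{factor.1} (i.e., the definite-case Hida control theorem quoted from \cite[Proposition 3.6]{LV-preprint}) intervenes. I do not expect any genuine obstacle: the manipulations are of exactly the same nature as those in the indefinite case, and no new ingredient is required beyond the definite analogue of the control theorem already cited in the paper.
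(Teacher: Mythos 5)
Your proof follows the paper's argument step for step: both transport the proof of Proposition \ref{lemma1} to the definite setting via the factorization \eqref{factor.1}, use the fact that $U_p$ acts as $a_p(\p)$ on $\J_\p$, and invoke \eqref{2} to relate $\mathrm{cor}_{H/K}(\mathcal P_m)$ to $\PP_m^{\chi_\p}$. This is exactly the route the paper takes, and the proposal is correct.
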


\begin{proof} We argue as in the proof of Proposition \ref{lemma1}. Thanks to factorization \eqref{factor.1},  the class $\eta_\p(\mathcal P_\p)=\mathfrak Z_\p$ is equal to the  corestriction from $H$ to $K$ of the image of $U_p^{-m}\eta_m(\mathcal P_m)\in H^0(\mathcal G_H,\J_m)$ in $H^0(\mathcal G_K,\J_\p)$ under the map induced by the map labeled $\pi_{m,\p}$ in \eqref{factor.1}. Since the action of $\mathfrak h_\infty^\ord$ on $\J_\p$ is via the character $\phi_\p$ defined in \eqref{char-p}, one has   
\[ \eta_\p(\mathcal P_\p)=\pi_{m,\p}\Big({\rm cor}_{H/K}\bigl(a_p(\p)^{-m}\eta_m(\mathcal P_m)\bigr)\!\Big). \]
Finally, using \eqref{2} we find that 
\[ \eta_\p(\mathcal P_\p)=a_p(\p)^{-m}[L_m:H_m]^{-1}\pi_{m,\p}\bigl(\eta_m(\mathbb P_m^{\chi_\p})\bigr)=a_p(\p)^{-m}[L_m:H_m]^{-1}\mathfrak X_\p, \]
and the result follows. \end{proof} 

We conclude this subsection with the counterpart of Corollary \ref{prop4.1}.

\begin{corollary} \label{prop5.1}
$\mathfrak Z_\p\neq0\Longleftrightarrow\mathfrak X_\p\neq0$.
\end{corollary}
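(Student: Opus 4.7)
The plan is to deduce this corollary directly from Proposition \ref{lemma2}, exactly mirroring how Corollary \ref{prop4.1} follows from Proposition \ref{lemma1} in the indefinite case. Proposition \ref{lemma2} gives the identity
\[
\mathfrak X_\p = a_p(\p)^m [L_m:H_m]\,\mathfrak Z_\p
\]
in the $F_\p$-vector space $\J_\p$ (or, more precisely, inside $H^0(\mathcal G_K,\J_\p)$), so the claim reduces to verifying that the scalar $a_p(\p)^m [L_m:H_m]$ is nonzero in $F_\p$.

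First I would observe that $[L_m:H_m]$ is a positive integer, hence nonzero in the characteristic-zero field $F_\p$. Second, I would invoke $p$-ordinarity of the Hida family: since $\p$ is an arithmetic prime coming from Hida's ordinary Hecke algebra $\mathfrak h_\infty^{\rm ord}$, the image $a_p(\p)=\phi_\p(U_p)$ lies in $F_\p^\times$ (this is built into the definition of $\mathfrak h_\infty^{\rm ord}$ as the product of local summands on which $U_p$ is invertible, as recalled just after Definition \ref{def-gen}). In particular $a_p(\p)^m\neq 0$ in $F_\p$.

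Multiplying both sides of the identity from Proposition \ref{lemma2} by $a_p(\p)^{-m}[L_m:H_m]^{-1}\in F_\p^\times$ yields
\[
\mathfrak Z_\p = a_p(\p)^{-m}[L_m:H_m]^{-1}\,\mathfrak X_\p,
\]
from which both implications $\mathfrak Z_\p\neq 0\Leftrightarrow \mathfrak X_\p\neq 0$ are immediate. There is no real obstacle here: the entire content of the corollary is repackaging Proposition \ref{lemma2} once one notes invertibility of the two scalar factors, so the expected proof is a single short paragraph of the form "immediate from Proposition \ref{lemma2}", perhaps recording the ordinarity remark explicitly for the reader's convenience.
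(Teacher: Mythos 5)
Your proof is correct and takes the same route as the paper, which simply says "Immediate from Proposition \ref{lemma2}." You have merely spelled out the (standard) reasons why the scalar $a_p(\p)^m[L_m:H_m]$ is a unit in $F_\p$, which the authors leave implicit.
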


\begin{proof} Immediate from Proposition \ref{lemma2}. \end{proof}

\subsection{Non-vanishing results} 

Keep the notation of \S \ref{sec4.2} for $L_K(f_\p,s)$ and $L_K(f_\p,\chi_\p,s)$. 

\begin{proposition} \label{prop5.2}
Suppose $\p\in\Gen_2(\theta)$ satisfies either (2) or (3) in Definition \ref{def-gen}. 
\begin{itemize}
\item[(a)] $L_K(f_\p,\chi_\p,s)$ vanishes to even order at $s=1$.
\item[(b)] $L_K(f_\p,\chi_\p,1)\neq0\Longleftrightarrow\mathfrak Z_\p\neq0$. 
\end{itemize} 
\end{proposition}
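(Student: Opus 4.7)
The plan is to follow step by step the strategy of Proposition \ref{prop4.2}, substituting Picard groups for Jacobians and a Gross--Waldspurger-type formula for the special value $L_K(f_\p,\chi_\p,1)$ in place of the Gross--Zagier-type formula for its derivative. As in the indefinite case, let $\mathfrak h$ denote the algebra generated over $\Z$ by the Hecke operators $T_n$ and diamond operators $\langle d\rangle$ with $(n,Np)=(d,Np)=1$ acting on $S_2(\Phi_m,\C)$, and let $e_\p\in\mathfrak h\otimes_\Z F_\p$ be the idempotent projecting onto the maximal summand on which $T_n$ acts as $a_\p(n)$ and $\langle\cdot\rangle$ acts as $\chi_{0,\p}^{-1}$.

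For part (a), apply the results of \cite{Ho3} with $\mathfrak s=p^m$, $\mathfrak c=p^m$, $\mathfrak m=N$. The sign of the functional equation for $L_K(f_\p,\chi_\p,s)$ is controlled by a product of local signs and, because we are in the \emph{definite} setting (so that $N^-$ is a product of an \emph{odd} number of primes), the global sign flips to $+1$ relative to the indefinite case treated in Proposition \ref{prop4.2}. Consequently, the order of vanishing of $L_K(f_\p,\chi_\p,s)$ at $s=1$ is even.

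For part (b), by Corollary \ref{prop5.1} it is enough to prove the equivalence $\mathfrak X_\p\neq0\Longleftrightarrow L_K(f_\p,\chi_\p,1)\neq0$. Hypotheses (2) and (3) ensure that $f_\p$ is a newform of level $Np^m$, so Jacquet--Langlands plus strong multiplicity one imply that the summand $e_\p(\J_m\otimes_{\mathcal O_F}F_\p)$ is sent isomorphically onto $\J_\p$ under the map $\pi_{m,\p}$ of \eqref{factor.1}. Consider the divisor
\[ P:=\sum_{\sigma\in\Gal(L_m/K)}\widetilde P_m^{\,\sigma}\otimes\chi_\p^{-1}(\sigma)\in\Div\bigl(\widetilde X_m^{(K)}\bigr)\otimes_\Z F_\p, \]
whose image in $\J_\p$ along $\eta_m$ followed by $\pi_{m,\p}$ is $\mathfrak X_\p$, and whose natural image in $e_\p(\J_m\otimes F_\p)$ is (up to an appropriate embedding $K\hookrightarrow B$) the Gross point $Q_{\chi,\Pi}$ appearing in the definite analogue of \cite[Theorem 4.6.2]{Ho3}. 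The Gross--Waldspurger-type formula then gives
\[ e_\p\bigl(\eta_m(P)\bigr)\neq 0\;\Longleftrightarrow\;L_K(f_\p,\chi_\p,1)\neq0, \]
and combining with the isomorphism $e_\p(\J_m\otimes_{\mathcal O_F}F_\p)\simeq\J_\p$ yields the desired equivalence.

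The main technical obstacle is to pin down the precise Gross--Waldspurger formula available in this generality (arbitrary finite-order twist $\chi_\p$, ramified level $Np^m$, definite quaternion algebra $B$ of arbitrary discriminant $N^->1$) and to verify that the Gross point implicit in our construction matches the one used by Howard; once this is checked, the remainder of the argument is a formal transcription of the proof of Proposition \ref{prop4.2}, with the injectivity of a Kummer map replaced by the much simpler fact that $\pi_{m,\p}$ restricts to an isomorphism on the $\p$-isotypic component of $\J_m\otimes_{\mathcal O_F}F_\p$.
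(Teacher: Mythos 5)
Your proposal follows essentially the same route as the paper: part (a) via the sign computation in \cite{Ho3} with the same choices of parameters, and part (b) by reducing to $\mathfrak X_\p$ via Corollary \ref{prop5.1}, introducing the same divisor $P$, and combining a Gross--Waldspurger statement from \cite{Ho3} with multiplicity one applied to $e_\p(\J_m\otimes_{\mathcal O_F}F_\p)$. The only point to tighten is the citation: the ``Gross--Waldspurger-type formula'' you invoke is exactly \cite[Theorem 3.3.3]{Ho3} (where the relevant point is denoted $Q_\chi$), stated directly in the definite setting and applied there to the Jacquet--Langlands lift $\phi_\p$, so no separate ``definite analogue of Theorem 4.6.2'' needs to be supplied.
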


\begin{proof} As in the proof of Proposition \ref{prop4.2}, let $\mathfrak h$ denote the algebra generated over $\Z$ by the Hecke operators $T_n$ with $(n,Np)=1$ and the diamond operators $\langle d\rangle$ with $(d,Np)=1$ acting on the $\C$-vector space of cusp forms in $S_2(\Phi_m,\C)$. Moreover, write $e_\p$ for the idempotent in $\mathfrak h\otimes_\Z F_\p$ which projects onto the maximal summand on which every $T_n$ acts as $a_\p(n)$ and $\langle\cdot\rangle$ acts as $\chi_{0,\p}^{-1}$. 

We apply the results in \cite{Ho3} with $\mathfrak s=p^m$, $\mathfrak c=p^m$ and $\mathfrak m=N$. As pointed out in \cite[p. 829]{Ho3}, the sign of the functional equation for $L_K(f_\p,\chi_\p,s)$ is $+1$, and this implies part (a). 
 
Let us prove (b). Define 
\[ \widetilde J_m:=\mathcal O_F\bigl[U_m\backslash\widehat B^\times/B^\times\bigr]\simeq\Pic(\widetilde X_m)\otimes_\Z\mathcal O_F\] and consider the divisor 
\[ P:=\sum_{\sigma\in\Gal(L_m/K)}\PP_m^\sigma\otimes\chi_\p^{-1}(\sigma)\in\Div\bigl(\widetilde X_m^{(K)}\bigr)
\otimes_\Z F_\p.\] 
Write $Q$ for the image of $P$ in $\widetilde J_m$. After fixing an appropriate embedding $K\hookrightarrow B$, the divisor $Q$ is equal to the divisor $Q_\chi$ considered in \cite[Theorem 3.3.3]{Ho3}. Let $\phi_\p$ be the (unique up to nonzero multiplicative factors) modular form on $B$ associated with $f_\p$ by the Jacquet--Langlands correspondence. Up to rescaling, one may view 
$\phi_\p$ as an $F_\p$-valued function on $U_m\backslash\widehat B^\times/B^\times$. By a slight abuse of notation, we adopt the same symbol for the $F_\p$-linear extension of $\phi_\p$ to $\widetilde J_m\otimes_{\mathcal O_F}F_\p$. By \cite[Theorem 3.3.3]{Ho3}, we know that 
\[ \phi_\p(Q)\not=0\;\Longleftrightarrow\;L_K(f_\p,\chi_\p,1)\not=0, \]
so it remains to show that $\phi_\p(Q)\not=0$ if and only if $\mathfrak Z_\p\not=0$, or equivalently, thanks to Corollary \ref{prop5.1}, if and only if $\mathfrak X_\p\neq0$. 

To prove this, we compute $\phi_\p(Q)$ as follows. Let $e_\p Q$ be the image of $Q$ in $e_\p(\widetilde J_m\otimes_{\mathcal O_F}F_\p)$. Since $f_\p$ is a newform of level $Np^m$, the same is true of $\phi_\p$ and it follows, by multiplicity one, that 
$e_\p(\widetilde J_m\otimes_{\mathcal O_F}F_\p)$ is one-dimensional. Therefore $\phi_\p(Q)\not=0$ if and only if $e_\p Q\not=0$. 
Now the image of $P$ in $\D_\p$ is $\PP_\p$, hence the image of $Q$ in $\J_\p$ is $\mathfrak X_\p$, by the commutativity of \eqref{square2-eq}. Finally, notice that, by multiplicity one (cf. \cite[\S 1.9]{BD-Heegner}), there is an isomorphism $e_\p(\widetilde J_m\otimes_{\mathcal O_F}F_\p)\simeq\J_\p$ sending $e_\p Q$ to $\mathfrak X_\p$, which completes the proof. \end{proof}

\begin{proposition} \label{prop5.3}
Suppose $\p\in\Gen_2(\theta)$ satisfies (1) in Definition \ref{def-gen}. 
\begin{itemize}
\item[(a)] $L_K(f_\p,s)$ vanishes to even order at $s=1$. 
\item[(b)] $L_K(f_\p,1)\neq0\Longrightarrow\mathfrak Z_\p\neq0$.  
\end{itemize}\end{proposition}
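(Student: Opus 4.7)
The proof plan closely parallels that of Proposition \ref{prop4.3}, with Jacobians of indefinite Shimura curves replaced throughout by Picard modules (free $\mathcal O_F$-modules on the appropriate double coset sets) of the corresponding \emph{definite} Shimura curves attached to the Eichler orders $R_0,R_1$, and with the Kummer map replaced by $\eta_K$. Part (a) is dispatched exactly as in Proposition \ref{prop4.3}(a), except that \cite[p.~829]{Ho3} now produces sign $+1$ in the functional equation, so the order of vanishing at $s=1$ is even; the $p$-stabilization identity \eqref{p-stab} then transfers this parity from $f_\p$ to $f_\p^\sharp$, the two Euler factors being non-zero at $s=1$ because $|a_p(\p)|=\sqrt p$.

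For (b) I would argue the contrapositive through Corollary \ref{prop5.1}: assuming $\mathfrak X_\p=0$, the goal is to deduce $L_K(f_\p^\sharp,1)=0$. Let $X_0,X_1$ denote the definite Shimura curves attached to $R_0,R_1$, with Picard modules $\mathcal J(X_i)$ and degeneracy maps $\alpha,\beta\colon X_1\to X_0$ as in the proof of Proposition \ref{prop4.3}. Since $f_\p$ is $p$-old, the idempotent $e_\p$ kills the $p$-new quotient, yielding a decomposition
\[ e_\p\mathcal J(X_1)\simeq \alpha^*e_\p\mathcal J(X_0)\oplus\beta^*e_\p\mathcal J(X_0). \]
On this summand the identities $U_p\circ\alpha^*=p\beta^*$ and $U_p\circ\beta^*=\beta^*\circ T_p-\alpha^*$ continue to hold (their proofs, as in \cite[p.~93]{Helm}, are local and insensitive to the definite/indefinite dichotomy), so $U_p$ has characteristic polynomial $(X-a_p(\p))^2(X-p/a_p(\p))^2$ and the ordinary projection agrees with the projection to the $a_p(\p)$-eigenspace. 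Multiplicity one for quaternionic modular forms of level $Np$ then identifies the corresponding summand of $\widetilde{\J}_1\otimes_{\mathcal O_F}F_\p$ with $\J_\p$, playing the role of Faltings's isogeny theorem in the indefinite case. Altogether one obtains the equivalence $\mathfrak X_\p\neq0 \Longleftrightarrow (U_p-p/a_p(\p))(e_\p P)\neq0$, where $P:=\sum_{\sigma\in\Gal(H_1/K)}P_1^\sigma$ and $P_1$ is the image of $\widetilde P_1$ in $X_1$.

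The remainder of the argument mirrors the closing steps of Proposition \ref{prop4.3}(b). The Albanese relation $\beta_*U_p=p\alpha_*$ converts $\mathfrak X_\p=0$ into $(\alpha_*-\beta_*/a_p(\p))(e_\p P)=0$, and the Euler system relations \eqref{BD-eq} — formal consequences of Hecke compatibilities between Heegner points of consecutive conductors, and therefore valid in the definite setting as well — give $\alpha_*(P)=\tfrac{T_p-1-\epsilon_K(p)}{p-\epsilon_K(p)}\beta_*(P)$. This forces $\beta_*(P)=0$, since the coefficient $\tfrac{1}{a_p(\p)}-\tfrac{a_p^\sharp(\p)-1-\epsilon_K(p)}{p-\epsilon_K(p)}$ is non-zero (as $a_p(\p)$ is a root of $X^2-a_p^\sharp(\p)X+p$ and $a_p(\p)\neq\pm 1$). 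To conclude I would invoke a Gross-style special-value formula — the definite counterpart of \cite[Theorem C]{Zh-1} — asserting that $L_K(f_\p^\sharp,1)\neq0$ forces the image of $\beta_*(P)$ under the quaternionic modular form on $B$ attached to $f_\p^\sharp$ by Jacquet--Langlands to be non-zero, contradicting $\beta_*(P)=0$.

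The hard part of the plan is securing such a Gross--Zhang formula in the precise generality required here (arbitrary odd square-free $N^-$, tame level $\Gamma_0(N)$, trivial character, and Gross--Heegner divisors of conductor $1$ on the genus-$0$ Shimura curve $X_0$) and verifying that the normalizations used above in \cite{Ho3}, \cite{BD-Heegner} and the Jacquet--Langlands identification are mutually compatible with its statement. A secondary, more elementary obstacle is the justification of the decomposition of $e_\p\mathcal J(X_1)$ without recourse to Faltings's isogeny theorem, which must be replaced by a direct multiplicity-one analysis of the relevant quaternionic Hecke modules.
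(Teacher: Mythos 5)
Your plan reproduces the paper's proof essentially step for step: part (a) via the sign $+1$ of the functional equation and the $p$-stabilization identity, and part (b) via Corollary \ref{prop5.1}, the decomposition of $e_\p J_1$ into $\alpha^*$- and $\beta^*$-summands, the $U_p$-relations, identification of the ordinary projection with the $a_p(\p)$-eigenspace, the Albanese relation and the Euler system relations, forcing $\beta_*(e_\p P)=0$ and then invoking a definite Gross--Zagier formula. The two ``obstacles'' you flag at the end are in fact non-issues and are resolved by citations that the paper supplies: the special value formula in the exact generality you need (square-free $N^-$, trivial character, Gross--Heegner points of conductor $1$) is Zhang's Theorem 1.3.2 in \cite{Zh}, applied through the Jacquet--Langlands form $\phi_\p^\sharp$ viewed as an $F_\p$-valued function on $\widehat R_0^\times\backslash\widehat B^\times/B^\times$; and the $U_p$-relations on Picard modules of definite Shimura curves are \cite[Theorem 3.16]{JL}, so one need not argue from Helm's indefinite computation. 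Your observation that Faltings's isogeny theorem is unnecessary in the definite case is correct and is exactly what the paper does: the identification $e_\p\widetilde J_1 \simeq e_\p J_1$ is immediate by multiplicity one, since these are Hecke eigenspaces inside free $\mathcal O_F$-modules on double cosets rather than Tate modules of abelian varieties.
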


\begin{proof} Part (a) follows as in the proof of Proposition \ref{prop4.3}, this time observing that, as explained in \cite[p. 829]{Ho3}, the order of vanishing of $L_K(f_\p^\sharp,s)$ at $s=1$ is even (here we apply again the results of \cite{Ho3} with $\mathfrak s=p$, $\mathfrak c=p$, $\mathfrak m=N$), equal to that of $L_K(f_\p,s)$ by \eqref{p-stab}. 

By Corollary \ref{prop5.1}, proving (b) is equivalent to proving that if $\mathfrak X_\p=0$ then $L_K(f_\p^\sharp,1)=0$. Let $X_0$ and $X_1$ be the Shimura curves over $\Q$ associated with the Eichler orders $R_0$ and $R_1$, respectively, so that (with notation as in \S \ref{definite-shimura-subsec}) one has
\[ X_i=\widehat R_i^\times\backslash(\widehat B^\times\times\PP)/ B^\times \] 
for $i=0,1$. There are degeneracy maps $\alpha,\beta:X_1\rightarrow X_0$ where $\alpha$ is the canonical projection and $\beta$ is the composition of $\alpha$ with an Atkin--Lehner involution at $p$ (see, e.g., \cite[\S 1.7]{BD-Heegner}). The maps $\alpha,\beta$ induce by Picard functoriality homomorphisms 
\[ \alpha^*,\beta^*:J_0\longrightarrow J_1 \] 
where $J_i$ is the Picard group of $X_i$ for $i=0,1$, so that $J_i=\Z[\widehat R_i^\times\backslash\widehat B^\times/B^\times]$. 
The maps $\alpha^*$ and $\beta^*$ are compatible with the action of the Hecke operators $T_n$ for $(n,Np)=1$ and the diamond operators $\langle d\rangle $ for $(d,Np)=1$. Write $\mathfrak h$ for the Hecke algebra generated over $\Z$ by these operators and denote $e_\p$ the idempotent in $\mathfrak h\otimes_{\mathbb Z}F_\p$ associated with $\p$, as in the proof of Proposition \ref{prop5.2}. Define  
\[ e_\p J_i:=e_\p(J_i\otimes_\Z F_\p) \]  
for $i=0,1$. Since the form $f_\p$ is old at $p$, the idempotent $e_\p$ annihilates the $p$-new quotient $J_1/(\alpha^*(J_0)\oplus\beta^*(J_0))$ of $J_1$, hence by \cite[\S 1.7]{BD-Heegner} there is a decomposition 
\begin{equation} \label{deco-def}
e_\p J_1\simeq\alpha^*e_\p J_0\oplus \beta^*e_\p J_0.
\end{equation}
The operator $T_n$ acts on $e_\p J_0$ as multiplication by $a_n^\sharp(\p)=a_n(\p)$ and, by strong multiplicity one, $T_p$ acts on it as multiplication by $a^\sharp_p(\p)$. On the other hand, we have the relations 
\[ U_p\circ\alpha^*=p\beta^*,\qquad U_p\circ\beta^*=\beta^*\circ T_p-\alpha^*. \] 
For a proof, see \cite[Theorem 3.16]{JL}. It follows that $U_p$ acts on $e_\p J_1$ via decomposition \eqref{deco-def} with characteristic polynomial 
\[ \bigl(X^2-a_p^\sharp(\p)X+p\bigr)^2=(X-a_p(\p))^2\cdot(X-p/a_p(\p))^2 \] 
and is diagonalizable. Furthermore, the projection to the $a_p(\p)$-eigenspace corresponds to the ordinary projection, because the other eigenvalue $p/a_p(\p)$ has positive $p$-adic valuation. By multiplicity one, the $F_\p$-vector spaces $e_\p J_1$ and $e_\p\widetilde J_1$ can be identified. Thus the map $\J_1\otimes_{\mathcal O_F}F_\p\rightarrow \J_\p$ arising from \eqref{factor.1} takes the summand $e_\p(\J_1\otimes_{\mathcal O_F}F_\p)$ isomorphically onto $\J_\p$. Taking the previous description of the ordinary projection into account, it follows that 
\begin{equation} \label{criterion-def}
\mathfrak X_\p\neq0\;\Longleftrightarrow\;\bigg(U_p-\frac{p}{a_p(\p)}\bigg)\bigl(e_\p\widetilde Q\bigr)\not=0,
\end{equation}
where 
\[ \widetilde Q:=\sum_{\sigma\in\Gal(L_1/K)}\widetilde P_1^\sigma\in\Div\bigl(\widetilde X_1^{(K)}\bigr) \] 
maps to $\PP_\p$ under the ordinary projection (recall that the character $\chi_{0,\p}$ is trivial in this case). To complete the proof we shall relate $e_\p \widetilde Q$, and thus $\mathfrak X_\p$, to classical Heegner (or, rather, Gross--Heegner) points on $X_0$. 

Let $P_1$ denote the image of $\widetilde P_1$ under the canonical map $\widetilde X_1\rightarrow X_1$. This is a Heegner point of conductor $p$ on the definite Shimura curve $X_1$ considered in \cite[Section 2]{BD-Heegner}. Define also $Q$ to be the image of $\widetilde Q$ in $X_1$, so that 
\[ Q:=\sum_{\sigma\in\Gal(L_1/K)}P_1^\sigma=[L_1:H_1]P\in \Div\bigl(X_1^{(K)}\bigr) \]
with 
\[ P:=\sum_{\sigma\in\Gal(H_1/K)}P_1^\sigma\in\Div\bigl(X_1^{(K)}\bigr). \]
Thus, using \eqref{criterion-def}, we conclude that  
\begin{equation} \label{criterion1-def}
\mathfrak X_\p\neq0\;\Longleftrightarrow\;\bigg(U_p-\frac{p}{a_p(\p)}\bigg)(e_\p P)\neq0.
\end{equation}
Let $\alpha_*,\beta_*:J_1\rightarrow J_0$ be the maps between Jacobians induced from $\alpha$ and $\beta$ by Albanese functoriality. The relation $\beta_* U_p=p\alpha_*$ combined with \eqref{criterion1-def} then shows that 
\begin{equation} \label{criterion2-def}
\mathfrak X_\p=0\;\Longrightarrow\;\bigg(\alpha_*-\frac{\beta_*}{a_p(\p)}\bigg)(e_\p P)= 0.
\end{equation}
Now we relate $e_\p P$ with special values of $L$-series. If $p$ splits in $K$ then let $\sigma$ and $\sigma^*$ denote the Frobenius elements in $\Gal(H_0/K)$ corresponding to the two primes of $K$ above $p$. Write $u$ for half the order of $\mathcal O_K^\times$ and consider the points $\alpha(P_1),\beta(P_1)\in X_0(K)$. By \cite[Proposition 4.6]{LV}, $\alpha(P_1)$ is a Heegner point of conductor $p$. Moreover, as already pointed out, the map $\beta$ is given by the composition of $\alpha$ with the Atkin--Lehner involution at $p$. Comparing with the constructions of \cite[\S\S 4.1--4.2]{LV} (see, in particular, \cite[eq. (12)]{LRV}), it follows that $\beta(P_1)$ is a Heegner point of conductor $1$. The Euler system relations in \cite[\S 2.4]{BD-Heegner} can then be applied, and we get that 
\begin{equation} \label{BD-eq-def}
u{\rm {tr}}_{H_1/H}\alpha(P_1)=\begin{cases}T_p\bigl(\beta(P_1)\bigr)&\text{if $p$ is inert in $K$}\\[2mm](T_p-\sigma-\sigma^*)\bigl(\beta(P_1)\bigr)&\text{if $p$ is split in $K$}\end{cases}
\end{equation} 
as divisors on $X_0$. Thanks to \eqref{BD-eq-def} and the relation $u[H_1:H]=p-\epsilon_K(p)$, one has 
\begin{equation} \label{alpha-beta-def}
\alpha_*(P)=\frac{T_p-1-\epsilon_K(p)}{p-\epsilon_K(p)}\bigl(\beta_*(P)\bigr).
\end{equation}
Combining \eqref{criterion2-def} and \eqref{alpha-beta-def} we see that if $\mathfrak X_\p=0$ then 
\[ \bigg(\frac{1}{a_p(\p)}-\frac{a_p^\sharp(\p)-1-\epsilon_K(p)}{p-\epsilon_K(p)}\bigg)\cdot\beta_*(e_\p P)=0. \]
Since $a_p(\p)$ is a root of the polynomial $X^2-a_p^\sharp(\p)X+p$ and $a_p(\p)\neq\pm1$, the coefficient multiplying $\beta_*(e_\p P)$ is nonzero, hence we conclude that if $\mathfrak X_\p$ is zero then $\beta_*(e_\p P)=e_\p\beta_*(P)$ is zero too. However, thanks to \cite[Theorem 1.3.2]{Zh}, this possibility is ruled out by the non-vanishing condition $L_K(f_\p^\sharp,1)\not=0$, 
which is equivalent to our assumption $L_K(f_\p,1)\not=0$ (cf. \eqref{p-stab}). Indeed, write $\phi_\p^\sharp$ for the (unique up to nonzero multiplicative factors) modular form on $B$ attached to $f_\p^\sharp$ by the Jacquet--Langlands correspondence. At the cost of rescaling, we may view $\phi_\p^\sharp$ as an $F_\p$-valued function on $\widehat R^\times\backslash\widehat B^\times/B^\times$, and we use the symbol $\phi_\p$ also for its $F_\p$-linear extension to $J_0\otimes_\Z F_\p$. Then \cite[Theorem 1.3.2]{Zh} shows that $\phi_\p^\sharp(\beta_*(P))\not=0$ if and only if $L_K(f_\p^\sharp, 1)\not=0$. But $\phi_\p^\sharp(\beta_*(P))=e_\p\beta_*(P)$, and the result follows. \end{proof}

If $\p$ is an arithmetic prime of $\mathcal R$ of weight $2$ then we say that the pair $(f_\p,\chi_\p)$ has \emph{analytic rank zero} if $L_K(f_\p,\chi_\p,1)\neq0$. 

\begin{corollary} \label{coro-def}
The following are equivalent: 
\begin{enumerate}
\item there is a $\p\in\Gen_2(\theta)$ such that $(f_\p,\chi_\p)$ has analytic rank zero; 
\item there is an arithmetic prime $\p$ such that $\mathfrak Z_\p\neq0$; 
\item $\mathfrak Z$ is not $\mathcal R$-torsion; 
\item $\mathfrak Z_\p\neq 0$ for all but finitely many arithmetic primes $\p$; 
\item $(f_\p,\chi_\p)$ has analytic rank zero for all but finitely many primes $\p\in\Gen_2(\theta)$. 
\end{enumerate}
\end{corollary}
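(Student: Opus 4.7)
The proof proceeds as a five-step cycle, closely paralleling the argument for Corollary \ref{equivalence} in the indefinite case, with the Selmer group machinery replaced by direct considerations on the $\mathcal R$-module $\J$. The routine implications come first: $(1)\Rightarrow(2)$ is immediate from Propositions \ref{prop5.2}(b) and \ref{prop5.3}(b), applied according to which case of Definition \ref{def-gen} the prime $\p$ satisfies; $(5)\Rightarrow(1)$ is trivial; and $(4)\Rightarrow(5)$ follows from Proposition \ref{prop5.2}(b) together with the observation that all but finitely many weight $2$ arithmetic primes satisfy condition (3) of Definition \ref{def-gen}, so the biconditional of that proposition applies to cofinitely many $\p\in\Gen_2(\theta)$.

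The technical heart of the proof is $(2)\Rightarrow(3)$. Here I would argue as follows: the space $\J_\p=\J\otimes_\mathcal RF_\p$ is one-dimensional over $F_\p$ by \cite[Proposition 9.1]{LV}, and combining this with Hida's control theorem in the form \cite[Proposition 3.6]{LV-preprint} one deduces that the localization $\J\otimes_\mathcal R\mathcal R_\p$ is a free $\mathcal R_\p$-module of rank one at every weight $2$ arithmetic prime $\p$. Granting this, if $\mathfrak Z_\p\neq0$ then Nakayama forces the image of $\mathfrak Z$ in $\J\otimes_\mathcal R\mathcal R_\p$ to be nonzero; since this localization is $\mathcal R_\p$-free and the map $\mathcal R\hookrightarrow\mathcal R_\p$ is injective (as $\mathcal R$ is a domain), no nonzero element of $\mathcal R$ can annihilate $\mathfrak Z$, so $\mathfrak Z$ is non-$\mathcal R$-torsion.

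The implication $(3)\Rightarrow(4)$ proceeds exactly as in the proof of the corresponding implication in \cite[Corollary 5]{Ho2}: the vanishing locus $\{\p:\mathfrak Z_\p=0\}$ is cut out inside $\mathrm{Spec}(\mathcal R)$ by the vanishing of a single element of $\mathcal R$ (up to localization, the coefficient of $\mathfrak Z$ with respect to a generic generator of $\J$), and since $\mathcal R$ is a two-dimensional complete local noetherian domain only finitely many height-one primes contain any given nonzero element, while there are infinitely many weight $2$ arithmetic primes.

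The step I expect to be delicate is the freeness of $\J\otimes_\mathcal R\mathcal R_\p$ at arithmetic primes needed in $(2)\Rightarrow(3)$: in the indefinite setting this sort of structural input is provided by Nekov\'a\v{r}--Howard theory for Greenberg Selmer groups, whereas on the definite side one has to extract it directly from the quaternionic Hida control theorem, verifying that the relevant ordinary projectors and integral structures behave compatibly under specialization at $\p$. The other steps are essentially bookkeeping once the freeness is in hand.
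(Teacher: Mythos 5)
Your proof follows essentially the same cycle as the paper's: $(1)\Rightarrow(2)$ via Propositions \ref{prop5.2} and \ref{prop5.3}, $(2)\Rightarrow(3)$ via the rank-one freeness of $\J\otimes_\mathcal R\mathcal R_\p$, $(3)\Rightarrow(4)$ via finite generation of $\J$ over $\mathcal R$, and $(4)\Rightarrow(5)\Rightarrow(1)$ as you indicate. The "delicate" freeness you flag is in fact asserted directly by \cite[Proposition 9.1]{LV} in the paper, and for $(3)\Rightarrow(4)$ the paper cites \cite[Lemma 2.1.7]{Ho1} rather than rerunning the argument from \cite[Corollary 5]{Ho2}, but the content is the same.
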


\begin{proof} The implication $(1)\Rightarrow(2)$ is immediate from Propositions \ref{prop5.2} and \ref{prop5.3}. On the other hand, the localization of $\J$ at $\p$ is free of rank one over $\mathcal R_\p$ by \cite[Proposition 9.1]{LV}, and the implication $(2)\Rightarrow(3)$ follows from this. If (3) holds then, since $\J$ is finitely generated over $\mathcal R$, \cite[Lemma 2.1.7]{Ho1} shows that $\mathfrak Z\not\in\p(\J\otimes_\mathcal R\mathcal R_\p)$ for all but finitely many $\p$, which proves $(4)$. Finally, $(4)\Rightarrow(5)$ is a direct consequence of Proposition \ref{prop5.2}, while $(5)\Rightarrow(1)$ is obvious. \end{proof}

 

To conclude this subsection, we prove 

\begin{theorem} \label{teo-def} 
Suppose there is a $\p\in\Gen_2(\theta)$ such that $(f_\p,\chi_\p)$ has analytic rank zero. Then $\widetilde H^1_f(K,\T^\dagger)$ is $\mathcal R$-torsion.
\end{theorem}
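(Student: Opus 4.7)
The plan is to parallel the proof of Theorem \ref{teo-indef} exactly, swapping the cohomological class in $H^1(K,\T^\dagger)$ for its definite-case incarnation $\mathfrak Z \in \J$, and invoking the definite analogs of \cite[Conjecture 10.3]{LV} and \cite[Theorem 10.6]{LV}.

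First, I would translate the analytic hypothesis into an algebraic non-vanishing. By Corollary \ref{coro-def}, the assumption that some $(f_\p,\chi_\p)$ with $\p \in \Gen_2(\theta)$ has analytic rank zero is equivalent to $\mathfrak Z$ being non-torsion as an element of the $\mathcal R$-module $\J$; moreover, one gets $\mathfrak Z_{\p'} \neq 0$ for all but finitely many arithmetic primes. This is the direct analog of the non-torsion conclusion extracted, in the proof of Theorem \ref{teo-indef}, from the rank-one analytic hypothesis for $\lambda \cdot \mathfrak Z$.

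Second, I would interpret the non-vanishing of $\mathfrak Z$ as the input needed to run the structural theorem on $\widetilde H^1_f(K,\T^\dagger)$. Since $\J$ localized at an arithmetic prime is free of rank one over $\mathcal R_\p$ (by \cite[Proposition 9.1]{LV}), pairing $\mathfrak Z$ with the Jacquet--Langlands transfer $\phi_\p$ already used in the proof of Proposition \ref{prop5.2} produces a $p$-adic $L$-function $\mathcal L \in \mathcal R$ whose specializations interpolate the central values $L_K(f_\p,\chi_\p,1)$; the non-vanishing of $\mathfrak Z$ is thus the non-vanishing of $\mathcal L$. The definite counterpart of \cite[Theorem 10.6]{LV} then asserts that, under this condition, $\widetilde H^1_f(K,\T^\dagger)$ is $\mathcal R$-torsion, which is the desired conclusion.

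The main obstacle --- the step that cannot simply be lifted from an earlier result in the excerpt --- is invoking this definite counterpart itself. Morally, it is a one-sided anticyclotomic Iwasawa main conjecture for the Hida family in the definite quaternionic setting; it should either follow from the Selmer-group and control-theorem machinery developed for the indefinite case in \cite{LV}, transposed to the Picard-type module $\J$, or from a Kolyvagin-style Euler-system argument applied to the Gross--Heegner points $\widetilde P_t$ that enter the construction of $\mathfrak Z$.
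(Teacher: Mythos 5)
Your proposal stalls precisely at the point you flag as the ``main obstacle.'' There is no definite counterpart of \cite[Theorem 10.6]{LV} available: in the indefinite case, that theorem relies on the big Heegner class $\mathfrak Z$ lying in Nekov\'a\v{r}'s Selmer group $\widetilde H^1_f(K,\T^\dagger)$, which lets one bound the rank from above by a Kolyvagin-type descent. In the definite case $\mathfrak Z$ lives in the module $\J$, not in any Galois cohomology group, so there is no direct way to feed its non-torsion into the structure of $\widetilde H^1_f(K,\T^\dagger)$. Constructing a $p$-adic $L$-function $\mathcal L\in\mathcal R$ from $\mathfrak Z$ and deducing torsionness of the Selmer module from $\mathcal L\neq 0$ is one divisibility of a definite-quaternionic Iwasawa main conjecture; asserting that divisibility here is essentially assuming the theorem you are trying to prove.

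The paper takes a genuinely different route that avoids any main-conjecture input. It uses Corollary \ref{coro-def} to propagate the analytic-rank-zero hypothesis to all but finitely many $\p\in\Gen_2(\theta)$, exactly as you suggest. But then, instead of appealing to a big Iwasawa-theoretic statement, it invokes a theorem of Nekov\'a\v{r} (\cite[Theorem A]{Nek-Level}) on Bloch--Kato Selmer groups of weight $2$ forms twisted by anticyclotomic characters: non-vanishing of $L_K(f_\p,\chi_\p,1)$ forces $H^1_f(K,V_\p^\dagger)=0$, prime by prime, for infinitely many $\p\in\Gen_2(\theta)$. One then uses the injection
\[
\widetilde H^1_f(K,\T^\dagger)_\p\big/\p\widetilde H^1_f(K,\T^\dagger)_\p\;\longmono\;\widetilde H^1_f(K,V^\dagger_\p),
\]
(as in \cite[Corollary 3.4.3]{Ho1}), the comparison $\widetilde H^1_f(K,V^\dagger_\p)\simeq H^1_f(K,V^\dagger_\p)$ for $\p\in\Gen_2(\theta)$ from \cite[Proposition 5.5]{LV}, and the Nakayama-type argument \cite[Lemma 2.1.7]{Ho1} to conclude that $\widetilde H^1_f(K,\T^\dagger)$ is $\mathcal R$-torsion. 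So the actual mechanism is a ``local-to-global'' lift of specialization-wise vanishing of Bloch--Kato Selmer groups, not a definite analog of the rank-one descent; you should replace your appeal to a hypothetical definite \cite[Theorem 10.6]{LV} with this Nekov\'a\v{r}-plus-Nakayama chain.
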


\begin{proof} Combining our assumption and Corollary \ref{coro-def}, we see that $(f_\p,\chi_\p)$ has analytic rank zero for all but finitely many primes $\p\in\Gen_2(\theta)$. For every arithmetic prime $\p$ of $\mathcal R$ let $H_f^1(K,V_\p^\dagger)$ be the Bloch--Kato Selmer group, whose definition (in terms of Fontaine's ring $B_{\text{cris}}$) can be found in \cite[Sections 3 and 5]{BK}. A result of Nekov\'a\v{r} (\cite[Theorem A]{Nek-Level}) then shows that $H^1_f(K,V_\p^\dagger)=0$ for infinitely many $\p\in\Gen_2(\theta)$. (Observe that, in the notation of \cite{Nek-Level}, $\chi=\,^c\!\chi$ is our character $\chi_\p$, $\omega$ is our character $\chi_{0,\p}^{-1}$ and $V\otimes\chi$ is our Galois representation $V_\p^\dagger$.) Now one argues as in the proof of \cite[Theorem 9.8]{LV}. Briefly, as explained in the proof of \cite[Corollary 3.4.3]{Ho1}, for every arithmetic prime $\p$ of $\mathcal R$ there is an injection 
\begin{equation} \label{injection-nekovar-eq}
\widetilde H^1_f(K,\T^\dagger)_\p\big/\p\widetilde H^1_f(K,\T^\dagger)_\p\;\longmono\;\widetilde H^1_f(K,V^\dagger_\p). 
\end{equation}
By \cite[Proposition 5.5]{LV}, for every $\p\in\Gen_2(\theta)$ there is an isomorphism 
\[ \widetilde H^1_f(K,V^\dagger_\p)\simeq H^1_f(K,V^\dagger_\p), \]
hence the left hand side in \eqref{injection-nekovar-eq} is trivial for infinitely many $\p$. By \cite[Lemma 2.1.7]{Ho1}, this shows that $\widetilde H^1_f(K,\T^\dagger)$ is $\mathcal R$-torsion. \end{proof}

\section{Low rank results over arbitrary quadratic fields} \label{complementary-subsec}

In this final section we give some complementary results in the spirit of Theorems \ref{teo-indef} and \ref{teo-def} but whose proofs do not use the elements $\mathfrak Z$ introduced before and work for any quadratic number field $K$ (not necessarily imaginary). In fact, these results are essentially corollaries of those obtained by Howard in \cite[Section 4]{Ho2}, and we add them here for completeness. 

\subsection{Splitting the Selmer groups}

Fix a quadratic field $K$ of discriminant $D$ prime to $Np$ and let $\epsilon_K$ denote the character of $K/\Q$. For each arithmetic prime $\p$ of $\mathcal R$ consider the twisted Galois representation $V_\p\otimes\epsilon_K$ and the twisted modular form
\[ f_\p\otimes\epsilon_K=\sum_{n\geq 1}\epsilon_K(n)a_n(\p)q^n. \] 
Since $D$ is prime to $Np$, the form $f_\p\otimes\epsilon_K$ is again a $p$-stabilized newform, of weight $k_\p$, level $ND^2p$ and character $\vartheta_\p\epsilon_K^2:=\psi_\p\omega^{k+j-k_\p}\epsilon_K^2$, where $k_\p$ is the weight of $f_\p$ and we denote $\vartheta_\p:=\psi_\p\omega^{k+j-k_\p}$ the character of $f_\p$ (so $\vartheta_\p=\theta_\p^2=\chi_{0,\p}^{-1}$ if $\p$ has weight $2$). See \cite[Theorem 9]{Li} for a proof of this fact (see also the discussion in the introduction to \cite{AL}). Since $\epsilon_K$ is quadratic, we shall simply write $\vartheta_\p$ for $\vartheta_\p\epsilon_K^2$, with the convention that $\vartheta_\p$ is viewed as a character of conductor $Dp^{m_\p}$ via the canonical projection $(\Z/Dp^{m_\p}\Z)^\times\rightarrow (\Z/p^{m_\p}\Z)^\times$. It follows that $V_\p\otimes\epsilon_K$ is the $p$-adic Galois representation attached to $f_\p\otimes\epsilon_K$ and $V_\p^\dagger\otimes\epsilon_K$ is the representation attached to $f_\p^\dagger\otimes\epsilon_K$.  

Now let $g$ be a weight $2$, $p$-ordinary normalized eigenform on $\Gamma_0(Mp)$ for some integer $M\geq1$ prime to $p$ with (non-necessarily primitive) Dirichlet character $\psi$, and let $\tilde\psi$ denote the primitive character associated with $\psi$. We say that $g$ has an \emph{exceptional zero} if $\tilde\psi(p)\neq 0$ and the $p$-th Fourier coefficient of $g$ (which is equal to the eigenvalue of $U_p$ acting on $g$) is $\pm1$. See \cite[Ch. I, \S 15]{MTT} for more general definitions and details. 

Since we will need it in the following, we recall the definition of the (strict) Greenberg Selmer group. Suppose that $L$ is a number field, $v$ is a finite place of $L$ and $D_v\subset\Gal(\bar\Q/\Q)$ is a decomposition group at $v$. Let $M$ be a $\Gal(\bar\Q/\Q)$-module, which we require, for simplicity, to be a two-dimensional vector space over a finite field extension $E$ of $\Q_p$. We also assume that $M$ is ordinary at all places $v|p$, which means that for all $v|p$ there is a short exact sequence
\begin{equation} \label{ordinary}
0\longrightarrow M^+_v\longrightarrow M\overset\pi\longrightarrow M^-_v\longrightarrow0
\end{equation} 
of $D_v$-modules such that $M^+_v$ and $M^-_v$ are one-dimensional $E$-vector spaces and the inertia subgroup $I_v\subset D_v$ acts trivially on $M^-_v$. For $v\nmid p$ the Greenberg local conditions at $v$ are given by the group 
\[ H^1_{\rm Gr}(L_v,M):=\ker\Big(H^1(L_v,M)\longrightarrow H^1(L_v^{\rm unr},M)\Big) \] 
where $L_v^{\rm unr}$ is the maximal unramified extension of the completion $L_v$ of $L$ at $v$, while for $v\mid p$ the Greenberg local conditions at $v$ are defined as 
\[ H^1_{\rm Gr}(L_v,M):=\ker\Big(H^1(L_v,M)\overset\pi\longrightarrow H^1(L_v,M^-_v)\Big); \] 
here, by a slight abuse of notation, we write $\pi$ for the map in cohomology induced by the corresponding map in \eqref{ordinary}.

The (strict) Greenberg Selmer group $\Sel_{\rm Gr}(L,M)$ is defined as 
\[ \Sel_{\rm Gr}(L,M):=\ker\Big(H^1(L,M)\xrightarrow{\prod_v\res_v}\prod_vH^1(L_v,M)\big/H^1_{\rm Gr}(L_v,M)\Big), \] 
where the product is over all finite places of $L$ and the maps $\res_v$ are the usual restriction maps. The groups $\Sel_{\rm Gr}(L,M)$ and $\widetilde H^1_f(L,M)$ sit in an exact sequence  
\[ 0\rightarrow\widetilde H^0_f(L,M)\longrightarrow H^0(L,M)\longrightarrow\bigoplus_{v|p}H^0(L_v,M^-_v)\longrightarrow\widetilde H^1_f(L,M)\longrightarrow\Sel_{\rm Gr}(L,M)\rightarrow0 \]
(see \cite[Lemma 9.6.3]{Ne}).

For $L=\Q$ or $L=K$ and $v|p$, we know that there is an exact sequence  
\begin{equation} \label{ordinary1}
0\longrightarrow(V_\p^\dagger)_v^+\longrightarrow V_\p^\dagger\longrightarrow(V_\p^\dagger)_v^-\longrightarrow0
\end{equation} 
as \eqref{ordinary} above for $M=V_\p^\dagger$. When $L=\Q$ we can twist \eqref{ordinary1} by $\epsilon_K$ to get a $D_p$-equivariant short exact sequence 
\begin{equation} \label{ordinary2}
0\longrightarrow(V_\p^\dagger\otimes\epsilon_K)_p^+\longrightarrow V_\p^\dagger\otimes\epsilon_K\longrightarrow(V_\p^\dagger\otimes\epsilon_K)_p^-\longrightarrow0
\end{equation} 
where $(V_\p^\dagger\otimes\epsilon_K)_p^\pm:=(V_\p^\dagger)_p^\pm\otimes\epsilon_K$. Furthermore, since $p$ is unramified in $K$, we have $\epsilon_K(\sigma)=1$ for all $\sigma\in I_v$. Therefore \eqref{ordinary2} is an exact sequece of type \eqref{ordinary} for $V_\p^\dagger\otimes\epsilon_K$, hence we may define $\Sel_{\rm Gr}(\Q,V_\p^\dagger\otimes\epsilon_K)$ by the previous recipe. 
 
If $f_\p$ does not have an exceptional zero then \cite[eq. (21)]{Ho1} ensures that  
\[ \widetilde H^1_f(L,V_\p^\dagger)\simeq\Sel_{\rm Gr}(L,V_\p^\dagger) \]
for $L=\Q$ or $L=K$. Following \cite[Lemma 2.4.4]{Ho1}, we extend this isomorphism to the case of the twisted representation $V_\p^\dagger\otimes\epsilon_K$. 

\begin{lemma} \label{lemma5.7}
If $f_\p\otimes\epsilon_K$ does not have an exceptional zero then 
\[ \widetilde H^1_f(\Q,V_\p^\dagger\otimes\epsilon_K)\simeq\Sel_{\rm Gr}(\Q,V_\p^\dagger\otimes\epsilon_K). \] 
\end{lemma}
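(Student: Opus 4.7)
The plan is to apply Nekov\'a\v{r}'s exact sequence from \cite[Lemma 9.6.3]{Ne} recalled above to $L=\Q$ and $M:=V_\p^\dagger\otimes\epsilon_K$, noting that the unique place of $\Q$ above $p$ is $p$ itself, so the sequence reads
\[ 0\rightarrow\widetilde H^0_f(\Q,M)\longrightarrow H^0(\Q,M)\longrightarrow H^0\bigl(\Q_p,M^-_p\bigr)\longrightarrow\widetilde H^1_f(\Q,M)\longrightarrow\Sel_{\rm Gr}(\Q,M)\rightarrow 0. \]
The asserted isomorphism will follow at once if we can show that $H^0(\Q_p,M^-_p)=0$, since this term vanishing forces both the boundary map into $\widetilde H^1_f(\Q,M)$ to be zero and (trivially) the projection onto $\Sel_{\rm Gr}(\Q,M)$ to be an isomorphism.

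The first step is to describe $M^-_p=(V_\p^\dagger\otimes\epsilon_K)_p^-$ explicitly. By \eqref{ordinary2}, this is simply $(V_\p^\dagger)_p^-\otimes\epsilon_K$, and both factors are unramified at $p$: the first because of the ordinary filtration, the second because $p\nmid D$. Hence $M^-_p$ is an unramified one-dimensional $F_\p$-representation of $D_p$ on which $\Frob_p$ acts by a scalar $\eta_\p(\Frob_p)\cdot\epsilon_K(p)$, where $\eta_\p$ is the unramified character describing the action of $D_p$ on $(V_\p^\dagger)_p^-$. Thus $H^0(\Q_p,M^-_p)=0$ if and only if $\eta_\p(\Frob_p)\epsilon_K(p)\neq1$.

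The second step is to identify this non-vanishing condition with the absence of an exceptional zero for $f_\p\otimes\epsilon_K$. Recall that by construction of the self-dual twist via the critical character $\Theta$, the character $\eta_\p$ on $(V_\p^\dagger)_p^-$ is described, up to the critical normalization, by the $U_p$-eigenvalue $a_p(\p)$ of $f_\p$ and the nebentype character of $f_\p$ evaluated at $p$. A direct computation (exactly of the type carried out by Howard in the proof of \cite[Lemma 2.4.4]{Ho1} for the untwisted case) shows that $\eta_\p(\Frob_p)\epsilon_K(p)=1$ if and only if the primitive character attached to $\vartheta_\p$ satisfies $\tilde\vartheta_\p(p)\neq 0$ and the $p$-th Fourier coefficient of $f_\p\otimes\epsilon_K$, namely $\epsilon_K(p)a_p(\p)$, equals $\pm1$ — i.e.\ precisely when $f_\p\otimes\epsilon_K$ has an exceptional zero in the sense defined earlier. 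Under our hypothesis this situation is excluded, so $H^0(\Q_p,M^-_p)=0$ and we conclude.

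The main obstacle, and where all the real work lies, is the second step: keeping track of the normalizations introduced by the critical character $\Theta$ and by the $\epsilon_K$-twist, and matching the resulting unramified character $\eta_\p\cdot\epsilon_K$ on the Frobenius with the standard exceptional-zero description (in the sense of \cite[Ch.~I, \S 15]{MTT}) of the twisted form $f_\p\otimes\epsilon_K$. Once the comparison is set up correctly, however, the argument is the same as in \cite[Lemma 2.4.4]{Ho1}, with the observation that $\epsilon_K$, being unramified at $p$, does not disturb the ordinary filtration or the exact sequence \eqref{ordinary2}.
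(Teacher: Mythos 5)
The proposal has a genuine gap, and it stems from taking at face value the paper's simplified description of ordinarity. You assert that both factors of $M^-_p = (V_\p^\dagger)_p^-\otimes\epsilon_K$ are unramified and that $\Frob_p$ acts on $M^-_p$ by the scalar $\eta_\p(\Frob_p)\epsilon_K(p)$, so that $H^0(\Q_p,M^-_p)=0$ iff $\eta_\p(\Frob_p)\epsilon_K(p)\neq1$. But $(V_\p^\dagger)_p^-$ is not unramified in general: the $D_p$-action on $(V_\p^\dagger)_p^-$ is given (as the paper's own proof states) by $\eta\cdot\Theta_\p^{-1}$, where $\eta$ is unramified with $\eta(\Frob_p)=\alpha_\p$ but $\Theta_\p^{-1}$ is typically ramified, since $\Theta_\p$ factors through the $p$-cyclotomic tower, which is totally ramified at $p$. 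So $H^0(\Q_p,M^-_p)\neq 0$ is equivalent to the triviality of the full character $\eta\epsilon_K\Theta_\p^{-1}$ on $D_p$, which packages \emph{two} conditions: $\alpha_\p\epsilon_K(p)=1$ \emph{and} $\Theta_\p$ trivial. Your criterion records only the first.

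This matters because the exceptional-zero property of $f_\p\otimes\epsilon_K$ also has two independent clauses: the $p$-th Fourier coefficient $\epsilon_K(p)a_p(\p)$ is $\pm1$, \emph{and} $\tilde\psi(p)\neq0$ where $\tilde\psi$ is the primitive character attached to the nebentype of $f_\p\otimes\epsilon_K$. The equality $\alpha_\p\epsilon_K(p)=1$ gives you the first clause immediately, but it does not yield $\tilde\psi(p)\neq0$: that requires the nebentype to be unramified at $p$, which in the paper's argument is deduced precisely from the triviality of $\Theta_\p$ (triviality of $\Theta_\p$ forces $\theta_\p$, hence $\vartheta_\p$, to be trivial). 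So your claimed equivalence ``$\eta_\p(\Frob_p)\epsilon_K(p)=1 \Leftrightarrow f_\p\otimes\epsilon_K$ has an exceptional zero'' is not correct, and the proof has a hole where the $\Theta_\p$-triviality argument should go. The fix is exactly the paper's route: if $H^0(\Q_p,M^-_p)\neq0$ then $\eta\epsilon_K=\Theta_\p$ on $D_p$; since the left side is unramified and the right side factors through the cyclotomic tower, both must be trivial; this simultaneously gives $\alpha_\p=\epsilon_K(p)$ and $\Theta_\p=1$, whence $k_\p=2$, $\vartheta_\p$ trivial, $\tilde\psi(p)\neq0$, and the exceptional zero.
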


\begin{proof} For simplicity, put $N:=(V_\p^\dagger\otimes\epsilon_K)_p^-$. We only need to show that $H^0(\Q_p,N)=0$. Recall that $D_v$ acts on $V_\p^\dagger$ by 
$\eta\Theta_\p^{-1}$ where $\eta:D_p/I_p\rightarrow F_\p$ sends the arithmetic Frobenius to the eigenvalue $\alpha_\p$ of $U_p$ acting on $f_\p$. Hence $D_p$ acts on the twist $N$ by $\eta\epsilon_K\Theta_\p^{-1}$. If $H^0(\Q_p,N)\neq0$ then $\eta\epsilon_K\Theta_\p^{-1}$ is trivial on $D_p$.  Since $\Theta_\p$ factors through $\Gal(\Q_p(\boldsymbol\mu_{p^\infty})/\Q_p)$, it follows that $\eta\epsilon_K$ is trivial on $\Gal(\bar\Q_p/\Q_p(\boldsymbol\mu_{p^\infty}))$. On the other hand, $\eta\epsilon_K$ is unramified, so, since it factors through the maximal abelian extension of $\Q_p$, it must be trivial. Thus $\alpha_\p=\epsilon_K(p)$ and $\Theta_\p$ is trivial too. Now we have 
\[ \Theta_\p=\epsilon_{\rm tame}^{(k+j-2)/2}\cdot\epsilon_{\rm wild}^{(k_\p-2)/2}\cdot\bigl(\psi_\p\circ\epsilon_{\rm wild}^{1/2}\bigr), \]
so if $\Theta_\p$ is trivial then $k_\p=2$ necessarily. Finally, the fact that $\Theta_\p$ is trivial implies that $\theta_\p$ is trivial too, hence $\vartheta_\p$  is trivial and the character of $f_\p\otimes\epsilon_K$ is $\epsilon_K$. Since $p$ is unramified in $K$, $\epsilon_K(p)\neq0$. Therefore, by definition, $f_\p\otimes\epsilon_K$ has an exceptional zero. \end{proof}

For lack of a convenient reference, we also add a proof of the following result, which is certainly known. 

\begin{proposition} \label{prop5.8}
If $\p$ is an arithmetic prime of $\mathcal R$ such that neither $f_\p$ nor $f_\p\otimes\epsilon_K$ has an exceptional zero then there is a decomposition 
\[ \widetilde H^1_f(K,V^\dagger_\p)=\widetilde H^1_f(\Q,V^\dagger_\p)\oplus\widetilde H^1_f(\Q,V^\dagger_\p\otimes\epsilon_K). \] 
\end{proposition}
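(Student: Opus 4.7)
The plan is to deduce the decomposition from the classical Shapiro--type splitting
\[ H^1(K,V_\p^\dagger)\simeq H^1(\Q,V_\p^\dagger)\oplus H^1(\Q,V_\p^\dagger\otimes\epsilon_K), \]
which reflects the isomorphism of $G_\Q$-modules $\Ind_{G_K}^{G_\Q}(V_\p^\dagger|_{G_K})\simeq V_\p^\dagger\oplus(V_\p^\dagger\otimes\epsilon_K)$, and to check that this global splitting induces an analogous decomposition of (Greenberg, hence Nekov\'a\v{r}) Selmer groups.

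First I would reduce to strict Greenberg Selmer groups on both sides. Lemma \ref{lemma5.7} provides $\widetilde H^1_f(\Q,V_\p^\dagger\otimes\epsilon_K)\simeq\Sel_{\rm Gr}(\Q,V_\p^\dagger\otimes\epsilon_K)$ under our assumptions, and the isomorphism $\widetilde H^1_f(\Q,V_\p^\dagger)\simeq\Sel_{\rm Gr}(\Q,V_\p^\dagger)$ under the no-exceptional-zero hypothesis on $f_\p$ is recorded in \cite[eq.~(21)]{Ho1}. Over $K$, an argument in the same spirit as in Lemma \ref{lemma5.7} --- applied at each completion $K_v$ with $v\mid p$ and distinguishing according to whether $p$ is split, inert, or ramified in $K$ --- gives $\widetilde H^1_f(K,V_\p^\dagger)\simeq\Sel_{\rm Gr}(K,V_\p^\dagger)$, the point being that the vanishing of $H^0(K_v,(V_\p^\dagger)_v^-)$ reduces in each case to the local computations appearing in the proof of Lemma \ref{lemma5.7} for $f_\p$ or $f_\p\otimes\epsilon_K$.

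Next, fix an extension $\tau\in G_\Q$ of the non-trivial element of $\Gal(K/\Q)$. Since $F_\p$ has characteristic zero, the idempotents $e_\pm:=(1\pm\tau)/2$ split $H^1(K,V_\p^\dagger)$ into the direct sum of its $\pm1$-eigenspaces for $\tau$. The inflation--restriction sequence attached to $1\to G_K\to G_\Q\to\Gal(K/\Q)\to1$, together with the vanishing in positive degrees of the Galois cohomology of a cyclic group of order $2$ on any $F_\p$-vector space (because $1/2\in F_\p$), yields $H^1(\Q,V_\p^\dagger)\simeq H^1(K,V_\p^\dagger)^+$. For the minus part, observe that $V_\p^\dagger\otimes\epsilon_K$ restricted to $G_K$ coincides with $V_\p^\dagger$, while on cocycles $\tau$ acts via its natural action twisted by $\epsilon_K(\tau)=-1$; hence the same inflation--restriction argument produces $H^1(\Q,V_\p^\dagger\otimes\epsilon_K)\simeq H^1(K,V_\p^\dagger)^-$, and the displayed Shapiro decomposition follows.

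The hard part is to verify that this global $\tau$-eigenspace decomposition restricts to a decomposition of Greenberg Selmer groups, which amounts to a place-by-place check of local conditions. For $\ell\nmid p$ the local conditions are given by the inertia-invariant part of $H^1(K_v,V_\p^\dagger)$, and the very same idempotent argument applied to $\bigoplus_{v\mid\ell}H^1(K_v,V_\p^\dagger)$ (which is naturally a $\Gal(K/\Q)$-module that permutes the places above $\ell$) yields a compatible splitting, using that $\epsilon_K$ is one-dimensional and unramified away from the discriminant. At $v\mid p$, the ordinary filtration \eqref{ordinary1} is canonical and therefore $\tau$-stable, and its twist by the unramified character $\epsilon_K$ is precisely the filtration \eqref{ordinary2}; distinguishing the cases split, inert, and ramified for $p$ in $K$, one checks that the kernel of $H^1(K_v,V_\p^\dagger)\to H^1(K_v,(V_\p^\dagger)_v^-)$ decomposes under $e_\pm$ into the analogous kernels for $V_\p^\dagger$ and $V_\p^\dagger\otimes\epsilon_K$ over $\Q_p$. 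Combining this local analysis with the reduction to $\Sel_{\rm Gr}$ from the first step then delivers the decomposition stated in the proposition.
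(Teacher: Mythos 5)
Your argument follows essentially the same route as the paper's: reduce to strict Greenberg Selmer groups, decompose $\Sel_{\rm Gr}(K,V_\p^\dagger)$ into $\pm$-eigenspaces of the generator $\tau$ of $\Gal(K/\Q)$ (using characteristic zero of $F_\p$), identify the eigenspaces via restriction with the Selmer groups over $\Q$, and verify place by place that the restriction isomorphisms respect the Greenberg local conditions. The small cosmetic differences are that you phrase the decomposition with explicit idempotents $(1\pm\tau)/2$ rather than invariants, and that you rederive the isomorphism $\widetilde H^1_f(K,V_\p^\dagger)\simeq\Sel_{\rm Gr}(K,V_\p^\dagger)$ from a Lemma \ref{lemma5.7}-style local vanishing argument where the paper simply invokes \cite[eq.~(21)]{Ho1}; both are legitimate, and neither change affects the substance. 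One minor imprecision: for $\ell\nmid p$ the Greenberg local condition is the kernel of restriction to $H^1(I_v,V)$ (unramified classes), not the ``inertia-invariant part of $H^1(K_v,V_\p^\dagger)$''; the intended argument is nonetheless clear and goes through once the condition is stated correctly.
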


\begin{proof} Our fixed embeddings $\bar\Q\hookrightarrow\bar\Q_\ell$ allow us to define for every finite place $v$ of $L=\Q$ or $L=K$ the decomposition group $D_v\subset\Gal(\bar \Q/L)$ and its inertia subgroup $I_v\subset D_v$. Now let $v$ be a finite place of $L$ and let $\ell$ denote the rational prime over which $v$ lies. Thus $D_v\subset D_\ell$ with index $2$ if $\ell$ does not split in $K$ and $D_v= D_\ell$ otherwise. If $\ell$ is split in $K$ and $\ell=v\bar v$ is a factorization of $\ell$ into prime ideals $v$ and $\bar v$ of the ring of integers of $K$ then $D_v= \tau^{-1}D_{\bar v}\tau$ where $\tau$ is the generator of $\Gal(K/\Q)$. Moreover, $I_v\subset I_\ell$ with index $2$ if $\ell$ ramifies $K$ and $I_v=I_\ell$ otherwise. Since, by assumption, neither $f_\p$ nor $f_\p\otimes\epsilon_K$ has an exceptional zero, \cite[eq. (21)]{Ho1} and Lemma \ref{lemma5.7} ensure that  
\[ \widetilde H^1_f(L,V_\p^\dagger)\simeq\Sel_{\rm Gr}(L,V_\p^\dagger),\quad\widetilde H^1_f(\Q,V_\p^\dagger\otimes\epsilon_K)\simeq\Sel_{\rm Gr}(\Q,V_\p^\dagger\otimes\epsilon_K) \] 
for $L=\Q$ or $L=K$. A direct computation using the explicit description given above shows that $\Sel_{\rm Gr}(K,V_\p^\dagger)$ is stable under the action of $\Gal(K/\Q)$. Since $F_\p$ is a field of characteristic zero and $\tau$ is an involution on $\Sel_{\rm Gr}(K,V^\dagger_\p)$, there is a decomposition 
\[ \Sel_{\rm Gr}(K,V^\dagger_\p)=\Sel_{\rm Gr}(K,V^\dagger_\p)^+\oplus\Sel_{\rm Gr}(K,V^\dagger_\p)^- \] 
where $\Sel_{\rm Gr}(K,V^\dagger_\p)^\pm$ is the $\pm$-eigenspaces for $\tau$. One has 
\[ \Sel_{\rm Gr}(K,V^\dagger_\p)^+=\Sel_{\rm Gr}(K,V^\dagger_\p)^{\Gal(K/\Q)},\quad\Sel_{\rm Gr}(K,V^\dagger_\p)^-=\Sel_{\rm Gr}(K,V^\dagger_\p\otimes\epsilon_K)^{\Gal(K/\Q)} \] 
(for the last equality use the fact that $\epsilon_K(\tau)=-1$). Restriction maps give isomorphisms 
\[ \res_K:H^1(\Q,V^\dagger_\p)\overset\simeq\longrightarrow H^1(K,V^\dagger_\p)^{\Gal(K/\Q)} \]
and
\[ \res_K:H^1(\Q,V^\dagger_\p\otimes\epsilon_K)\overset\simeq\longrightarrow H^1(K,V^\dagger_\p\otimes\epsilon_K)^{\Gal(K/\Q)},  \]
a consequence of the fact that the relevant kernels and cokernels are $2$-torsion groups as well as $F_\p$-vector spaces. To conclude, we have to show that the last two isomorphisms respect the local conditions defining Selmer groups. We deal with both cases simultaneously, letting $V$ denote either $V_\p^\dagger$ or $V_\p^\dagger\otimes\epsilon_K$. Thus we need to check that the isomorphism 
\[ \res_K:H^1(\Q,V)\overset\simeq\longrightarrow H^1(K,V)^{\Gal(K/\Q)} \] 
takes $\Sel_{\rm Gr}(\Q,V)$ isomorphically onto $\Sel_{\rm Gr}(K,V)^{\Gal(K/\Q)}$. Since, with notation as before, $D_v\subset D_\ell$ and $I_v\subset I_\ell$, the image of $\Sel_{\rm Gr}(\Q,V)$ under $\res_K$ is contained in $\Sel_{\rm Gr}(K,V)$, hence in $\Sel_{\rm Gr}(K,V)^{\Gal(K/\Q)}$. To prove surjectivity, fix $c\in \Sel_{\rm Gr}(K,V)^{\Gal(K/\Q)}$ and let $\tilde c\in H^1(K,V)$ 
be such that $\res_K(\tilde c)=c$: we want to show that $\tilde c\in\Sel_{\rm Gr}(\Q,V)$. In other words, our goal is to prove that $\res_\ell(\tilde c)\in H^1_{\rm Gr}(\Q_\ell,V)$ for every prime number $\ell$. First suppose that $\ell\neq p$. Since $I_\ell=I_v$ whenever $\ell$ is unramified in $K$, and since we already know that $\res_v(c)=0$ in $H^1(I_v,V)$, we can assume that $\ell$ is ramified in $K$, in which case $[I_\ell:I_v]=2$. But restriction from $I_\ell$ to $I_v$ yields an isomorphism 
\[ H^1(I_\ell,V)\overset\simeq\longrightarrow H^1(I_v,V)^{I_\ell/I_v} \] 
(kernel and cokernel are both $2$-torsion groups and $F_\p$-vector spaces) and $\res_v(c)$ actually belongs to $H^1(I_v,V)^{I_\ell/I_v}$, which shows that $\res_\ell(\tilde c)=0$. Suppose now that $v$ lies above $p$. Since $[D_p:D_v]\leq2$, there is an isomorphism 
\[ H^1(D_p,V^-)\overset\simeq\longrightarrow H^1(D_v,V^-)^{D_p/D_v} \] 
(again, kernel and cokernel are $2$-torsion $F_\p$-vector spaces) and $\pi(c)$ actually belongs to 
$H^1(D_v,V^-)^{D_p/D_v}$, which gives $\pi(\tilde c)=0$. Therefore $\tilde c\in\Sel_{\rm Gr}(\Q,V)$, as was to be shown. \end{proof}
 
\subsection{Twists of Hida families and low rank results}

The representations $V_\p\otimes\epsilon_K$, or equivalently the cusp forms $f_\p\otimes\epsilon_K$, can be $p$-adically interpolated by a Hida family $\gamma_\infty$ of tame level $ND^2$, in the same way as the $V_\p$ and the $f_\p$ are interpolated by $\phi_\infty$ (cf. \cite[Section 7]{DG}); such a Hida family will play an auxiliary role in our subsequent considerations. More precisely, if $\mathfrak H_\infty^\ord$ denotes Hida's ordinary Hecke algebra over $\cO_F$ of tame level $\Gamma_0(ND^2)$ then 
\[ \gamma_\infty:\mathfrak H_\infty^\ord\longrightarrow\mathcal R' \]
where $\mathcal R'$ is the branch of the Hida family on which $f\otimes\epsilon_K$ lives. In particular, the ring $\mathcal R'$ is a complete local noetherian domain, finite and flat over $\Lambda$, which is defined in terms of projective limits of Hecke algebras of tame level $ND^2$. As before, the Hida family $\gamma_\infty$ can be viewed as a formal power series $g_\infty\in\mathcal R'[\![q]\!]$, whose specializations at arithmetic primes $\mathfrak Q$ of $\mathcal R'$ will be denoted $g_{\mathfrak Q}$. It follows that if $\p$ is an arithmetic prime of $\mathcal R$ then $f_\p\otimes\epsilon_K=g_\fP$ for a suitable arithmetic prime $\fP$ of $\mathcal R'$ of weight $k_\p$. In the following, whenever we deal with an arithmetic prime $\p$ of $\mathcal R$ we shall use the symbol $\fP$ to indicate the arithmetic prime of $\mathcal R'$ such that $g_\fP=f_\p\otimes\epsilon_K$ (and analogously for $\p'$ and $\fP'$); more generally, we adopt lowercase gothic letters to denote primes of $\mathcal R$, while we reserve uppercase letters for primes of $\mathcal R'$.

One can attach to $\gamma_\infty$ a big Galois representation $\T'$ in exactly the same manner as $\T$ is associated with $\phi_\infty$ (see \cite[\S 5.5]{LV}), and there are specializations $W_{\mathfrak Q}:=\T'\otimes_{\mathcal R'}F'_{\mathfrak Q}$ of $\T'$ at arithmetic primes $\mathfrak Q$ of $\mathcal R'$, where $F'_{\mathfrak Q}$ is the residue field of $\mathcal R'$ at $\mathfrak Q$. In particular, with the convention introduced before, $W_\fP\simeq V_\p\otimes\epsilon_K$ for every arithmetic prime $\p$ of $\mathcal R$. Finally, write $\T'^{,\dagger}$ for the $\Theta^{-1}$-twist of $\T'$, so that $W_\fP^\dagger\simeq V_\p^\dagger\otimes\epsilon_K$ for every arithmetic prime $\p$ of $\mathcal R$, and introduce the notion of \emph{generic} weight $2$ arithmetic prime of $\mathcal R'$ in terms of an $\mathcal R'^{,\times}$-valued character $\theta'$ defined as in Section \ref{hida-sec}.    

\begin{theorem} \label{complementary-thm}
Assume that there exists an arithmetic prime $\p$ of even weight and trivial character such that 
\begin{itemize}
\item[(a)] $(f_\p,{\bf 1})$ has analytic rank zero;  
\item[(b)] neither $f_\p$ nor $f_\p\otimes\epsilon_K$ has an exceptional zero.
\end{itemize} 
Then
\begin{enumerate}
\item $\widetilde H^1_f(K,\T^\dagger)$ is $\mathcal R$-torsion;
\item $(f_{\p'},{\bf 1})$ has analytic rank zero for all but finitely many arithmetic primes $\p'$ of even weight and trivial character. 
\end{enumerate}
\end{theorem}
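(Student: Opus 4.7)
The plan is to reduce the statement to Howard's results from \cite[Section 4]{Ho2} by applying them separately to the Hida family $\phi_\infty$ (interpolating the $f_\p$) and to the twisted Hida family $\gamma_\infty$ (interpolating the $f_\p\otimes\epsilon_K$), then using Proposition \ref{prop5.8} to glue the two $\Q$-rational pictures back together over $K$.

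Since $\p$ has trivial character, the Rankin--Selberg $L$-function factors over $K$ as
\[ L_K(f_\p,s)=L(f_\p,s)\cdot L(f_\p\otimes\epsilon_K,s), \]
so hypothesis (a) is equivalent to the conjunction $L(f_\p,1)\neq0$ and $L(g_\fP,1)=L(f_\p\otimes\epsilon_K,1)\neq0$. Applying Howard's theorem to $\phi_\infty$ with input $L(f_\p,1)\neq0$ yields that $\widetilde H^1_f(\Q,\T^\dagger)$ is $\mathcal R$-torsion and that $L(f_{\p'},1)\neq0$ for all but finitely many arithmetic primes $\p'$ of $\mathcal R$ of even weight and trivial character. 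Applying it to $\gamma_\infty$ with input $L(g_\fP,1)\neq0$ yields, analogously, that $\widetilde H^1_f(\Q,\T'^{,\dagger})$ is $\mathcal R'$-torsion and that $L(g_{\fP'},1)\neq0$ for all but finitely many arithmetic primes $\fP'$ of $\mathcal R'$ of even weight and trivial character. Conclusion (2) is then immediate from the factorization above, using the bijection $\p'\leftrightarrow\fP'$ induced by the $\epsilon_K$-twist between arithmetic primes of $\mathcal R$ and of $\mathcal R'$ of weight $2$ with trivial character.

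For (1), I would use the Nekov\'a\v{r} control injection analogous to \eqref{injection-nekovar-eq},
\[ \widetilde H^1_f(K,\T^\dagger)_{\p'}\big/\p'\widetilde H^1_f(K,\T^\dagger)_{\p'}\;\longmono\;\widetilde H^1_f(K,V^\dagger_{\p'}), \]
available at every arithmetic prime $\p'$ of $\mathcal R$. The exceptional zero condition imposes a closed restriction on $a_p(\p')$ (compare the argument in the proof of Lemma \ref{lemma5.7}), so condition (b) propagates to all but finitely many weight $2$ arithmetic primes $\p'$ of trivial character. For any such $\p'$, Proposition \ref{prop5.8} gives
\[ \widetilde H^1_f(K,V^\dagger_{\p'})\simeq\widetilde H^1_f(\Q,V^\dagger_{\p'})\oplus\widetilde H^1_f(\Q,V^\dagger_{\p'}\otimes\epsilon_K), \]
and specializing the two $\Q$-torsion statements produced above (via the Nekov\'a\v{r} injections for $\phi_\infty$ and $\gamma_\infty$) shows that each summand on the right vanishes for all but finitely many $\p'$. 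Thus the right hand side of the displayed injection is zero for infinitely many $\p'$, and \cite[Lemma 2.1.7]{Ho1} then forces $\widetilde H^1_f(K,\T^\dagger)$ to be $\mathcal R$-torsion.

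The main obstacle is the bookkeeping: one must simultaneously ensure that the ``all but finitely many'' conclusions from the two independent applications of Howard's theorem (one over $\mathcal R$, one over $\mathcal R'$) overlap at infinitely many arithmetic primes $\p'$, matched with their counterparts $\fP'$ via the $\epsilon_K$-twist, and that condition (b) also holds at each such $\p'$, so that both summands in the decomposition from Proposition \ref{prop5.8} can be killed along a common infinite set of primes.
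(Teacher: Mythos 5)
Your overall strategy — factor the Rankin--Selberg $L$-function, apply Howard's results over $\Q$ separately to the Hida families $\phi_\infty$ and $\gamma_\infty$, and reassemble over $K$ via Proposition \ref{prop5.8} — is the same one the paper uses, and parts of the bookkeeping you worry about are in fact unproblematic (the exceptional sets from each application and from propagating condition (b) are all finite, so their union is finite). However, there is a genuine gap in the key step of your proof of part (1). You claim that ``specializing the two $\Q$-torsion statements produced above (via the Nekov\'a\v{r} injections for $\phi_\infty$ and $\gamma_\infty$) shows that each summand on the right vanishes.'' This deduction goes the wrong way along the control map. The Nekov\'a\v{r} map is an \emph{injection}
\[ \widetilde H^1_f(\Q,\T^\dagger)_{\p'}\big/\p'\,\widetilde H^1_f(\Q,\T^\dagger)_{\p'}\;\longmono\;\widetilde H^1_f\bigl(\Q,V^\dagger_{\p'}\bigr), \]
so knowing the left-hand side vanishes for almost all $\p'$ — which is what $\mathcal R$-torsion plus \cite[Lemma 2.1.7]{Ho1} gives — says nothing about the right-hand side. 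The same issue arises for $\gamma_\infty$ and $\T'^{,\dagger}$. The needed vanishings $\widetilde H^1_f(\Q,V^\dagger_{\p'})=0$ and $\widetilde H^1_f(\Q,W^\dagger_{\fP'})=0$ must instead be deduced directly from Kato's Euler system results, exactly as in the proof of \cite[Theorem 7]{Ho2}; they are an independent conclusion of Howard's theorem, not a formal consequence of the torsion statement. Once these are in hand, Proposition \ref{prop5.8} gives $\widetilde H^1_f(K,V^\dagger_{\p'})=0$ for infinitely many $\p'$, and the injection \eqref{injection-nekovar-eq} over $K$ (used in the correct direction, together with \cite[Lemma 2.1.7]{Ho1}) finishes the argument.

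A secondary point: the hypothesis and conclusion (2) concern arithmetic primes of arbitrary \emph{even} weight, so the relevant central values are $L(f_{\p'},k_{\p'}/2)$ and $L(f_{\p'}\otimes\epsilon_K,k_{\p'}/2)$, not $L(\,\cdot\,,1)$, which is only the central point when $k_{\p'}=2$. This does not affect the structure of the argument but should be stated correctly when invoking Howard's results and the factorization \eqref{L-splitting-eq}.
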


\begin{proof} First of all, for every arithmetic prime $\mathfrak q$ with trivial character there is a factorization of $L$-functions
\begin{equation} \label{L-splitting-eq}
L_K(f_{\mathfrak q},s)=L_K(f_{\mathfrak q},\boldsymbol1,s)=L(f_{\mathfrak q},s)\cdot L(f_{\mathfrak q}\otimes\epsilon_K,s). 
\end{equation}
Now let $\p$ be as in the statement of the theorem. Since $(f_\p,\boldsymbol1)$ has analytic rank zero, it follows from \eqref{L-splitting-eq} that $L(f_\p,k_\p/2)\neq0$ and $L(f_\p\otimes\epsilon_K,k_\p/2)\neq0$. Applying the results of Kato (\cite{Ka}) to the Hida families $\phi_\infty$ and $\gamma_\infty$ as in the proof of \cite[Theorem 7]{Ho2}, and recalling that $W_{\fP'}^\dagger\simeq V_{\p'}^\dagger\otimes\epsilon_K$, we obtain that 
\[ \widetilde H^1_f\bigl(\Q,V^\dagger_{\p'}\bigr)=0,\qquad\widetilde H^1_f\bigl(\Q,V^\dagger_{\p'}\otimes\epsilon_K\bigr)\simeq\widetilde H^1_f\bigl(\Q,W^\dagger_{\fP'}\bigr)=0 \]
for infinitely many arithmetic primes $\p'$ of $\mathcal R$. Now Proposition \ref{prop5.8} gives $\widetilde H_f^1(K,V^\dagger_{\p'})=0$ for infinitely many $\p'$, and for part (1) we conclude as in the proof of Theorem \ref{teo-def}. To prove part (2) we apply \cite[Theorem 7]{Ho2} to the families $\phi_\infty$ and $\gamma_\infty$ and get that $L(f_{\p'},k_{\p'}/2)\neq0$ and $L(f_{\p'}\otimes\epsilon_K,k_{\p'}/2)\neq0$ for all but finitely many $\p'$ of $\mathcal R$ of even weight and trivial character. On the other hand, equality \eqref{L-splitting-eq} with $\mathfrak q=\p'$ implies that
\[L_K(f_{\p'},k_{\p'}/2)=L(f_{\p'},k_{\p'}/2)\cdot L(f_{\p'}\otimes\epsilon_K,k_{\p'}/2), \] 
and we are done. \end{proof}  

Under narrower assumptions, we can also offer a result in rank one analogous to Theorem \ref{complementary-thm}. Here $w=\pm1$ is the sign appearing in the functional equation for the Mazur--Tate--Teitelbaum $p$-adic $L$-function (see \cite[Proposition 2.3.6]{Ho1}).  

\begin{theorem} \label{complementary-thm-bis}
Assume that $w=1$ and there exists an arithmetic prime $\p$ of weight $2$ and trivial character such that 
\begin{itemize}
\item[(a)] $(f_\p,{\bf 1})$ has analytic rank one;  
\item[(b)] $\p\in\Gen_2(\theta)$ and $\fP\in\Gen_2(\theta')$.
\end{itemize} 
Then $\widetilde H^1_f(K,\T^\dagger)$ is an $\mathcal R$-module of rank one.
\end{theorem}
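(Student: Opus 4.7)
The plan is to follow the template of Theorem \ref{complementary-thm} but to replace the rank zero input by a rank one input, using both the rank zero and rank one results of \cite[Section 4]{Ho2} applied to the two auxiliary Hida families $\phi_\infty$ and $\gamma_\infty$. Specializing the factorization \eqref{L-splitting-eq} at $\mathfrak q = \p$ gives
\[ L_K(f_\p,s) = L(f_\p,s)\cdot L(f_\p\otimes\epsilon_K,s), \]
so the hypothesis $\ord_{s=1}L_K(f_\p,s) = 1$ forces exactly one factor to vanish to order one and the other to be non-zero at $s = 1$. The assumption $w = 1$ on the sign of the Mazur--Tate--Teitelbaum $p$-adic functional equation for $\phi_\infty$ (cf.\ \cite[Proposition 2.3.6]{Ho1}) forces $\ord_{s=1}L(f_{\p'},s)$ to be even for all weight $2$ trivial-character specializations $\p'$ outside a finite exceptional-zero set, in particular for $\p$ itself. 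Hence $L(f_\p,1)\neq 0$, which forces $L(f_\p\otimes\epsilon_K,1) = 0$ and $L'(f_\p\otimes\epsilon_K,1)\neq 0$; equivalently, $(g_\fP,\boldsymbol 1)$ has analytic rank one.

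Next, I would apply \cite[Section 4]{Ho2} to each Hida family separately. With input $L(f_\p,1)\neq 0$ and $\p\in\Gen_2(\theta)$, Howard's rank zero theorem yields that $\widetilde H^1_f(\Q,\T^\dagger)$ is $\mathcal R$-torsion and $\widetilde H^1_f(\Q,V^\dagger_{\p'}) = 0$ for all but finitely many weight $2$ arithmetic primes $\p'$ of trivial character. With input $L'(g_\fP,1)\neq 0$ and $\fP\in\Gen_2(\theta')$, Howard's rank one theorem yields that $\widetilde H^1_f(\Q,\T'^{,\dagger})$ has $\mathcal R'$-rank one and $\dim_{F'_{\fP'}}\widetilde H^1_f(\Q,W^\dagger_{\fP'}) = 1$ for all but finitely many such $\fP'$. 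Using the dictionary $W^\dagger_{\fP'}\simeq V^\dagger_{\p'}\otimes\epsilon_K$ and Proposition \ref{prop5.8} (whose exceptional-zero hypothesis holds outside a finite set), we deduce that for infinitely many $\p'\in\Gen_2(\theta)$ with $\fP'\in\Gen_2(\theta')$,
\[ \widetilde H^1_f(K,V^\dagger_{\p'}) = \widetilde H^1_f(\Q,V^\dagger_{\p'})\oplus \widetilde H^1_f(\Q,V^\dagger_{\p'}\otimes\epsilon_K) \]
has $F_{\p'}$-dimension $0 + 1 = 1$.

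Finally, to transfer this dimension count to the big Selmer group, I would use the Nekov\'a\v{r} control injection
\[ \widetilde H^1_f(K,\T^\dagger)_{\p'}\big/\p'\widetilde H^1_f(K,\T^\dagger)_{\p'}\;\longmono\;\widetilde H^1_f(K,V^\dagger_{\p'}) \]
of \cite[Corollary 3.4.3]{Ho1} (cf.\ \eqref{injection-nekovar-eq}): at almost all $\p'$ the left-hand side has dimension $\mathrm{rank}_{\mathcal R}\widetilde H^1_f(K,\T^\dagger)$, so this rank is at most one. For the matching lower bound, the control map is in fact surjective (and thus an isomorphism) at almost all arithmetic primes by standard Iwasawa-theoretic arguments of Nekov\'a\v{r} in the style of \cite[Proposition 12.7.13.4]{Ne}, so the non-vanishing of the right-hand side at infinitely many $\p'$ forces $\widetilde H^1_f(K,\T^\dagger)$ to be non-torsion, hence of $\mathcal R$-rank exactly one. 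The main obstacle I anticipate is the parity step in the first paragraph: deducing $L(f_\p,1)\neq 0$ from $w = 1$ requires a careful use of the $p$-adic functional equation in the presence of possible exceptional zeros. A secondary but non-trivial technicality is ensuring that the intersection of the two ``all but finitely many'' loci produced by Howard's rank zero and rank one theorems with the exceptional-zero-free locus and the genericity loci still contains infinitely many arithmetic primes.
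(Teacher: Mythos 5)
Your overall structure matches the paper's: specialize the factorization $L_K(f_{\mathfrak q},s) = L(f_{\mathfrak q},s)\cdot L(f_{\mathfrak q}\otimes\epsilon_K,s)$, feed the two auxiliary families $\phi_\infty$ and $\gamma_\infty$ into Howard's rank zero and rank one theorems from \cite[Section 4]{Ho2}, split the Selmer group over $K$ using Proposition \ref{prop5.8}, and conclude via the control map of \cite[Corollary 3.4.3]{Ho1}. The substantive divergence is in the very first step. You claim that $w=1$ forces $\ord_{s=1}L(f_\p,s)$ to be even, hence $L(f_\p,1)\neq 0$, so that the rank one hypothesis is necessarily absorbed entirely by the twisted factor $L(f_\p\otimes\epsilon_K,s)$. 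The paper does \emph{not} make this deduction: it notes that the factorization leaves two symmetric possibilities — either $L(f_\p,1)\neq 0$ and $L'(f_\p\otimes\epsilon_K,1)\neq 0$, or $L(f_\p\otimes\epsilon_K,1)\neq 0$ and $L'(f_\p,1)\neq 0$ — and proves the result in both cases (the second being handled by the obvious symmetry, swapping the roles of $\phi_\infty$ and $\gamma_\infty$). Your parity argument, if correct, would show case (ii) is vacuous; but you yourself flag this as the weak point, and it depends on identifying the sign $w$ of the two-variable $p$-adic functional equation with the sign of the complex functional equation of the specific newform $f_\p^\sharp$, a compatibility that the paper deliberately does not lean on. Since you do not treat case (ii) at all, your argument as written has a gap precisely where the paper's does not: if the parity step fails or is more delicate than anticipated (e.g.\ because of local sign subtleties in the dictionary between $w$ and $\varepsilon(f_\p^\sharp)$), you would be left without a proof in case (ii), whereas the paper's symmetric treatment handles it for free. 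I would therefore recommend either fully justifying the implication $w=1 \Rightarrow L(f_\p,1)\neq 0$ (including the non-existence of exceptional zero corrections for $\p$, which does follow since $f_\p$ is the $p$-stabilization of a level $N$ form with $p\nmid N$, so $|a_p(\p)|_p \neq 1$), or simply adding the symmetric case (ii) as the paper does. Your final paragraph on the control injection and its surjectivity at almost all primes is essentially the content of the cited argument from \cite[Corollary 3.4.3]{Ho1}, though phrased somewhat informally; the paper simply defers to that reference.
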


\begin{proof} Let $\p$ be as in the statement. In light of factorization \eqref{L-splitting-eq}, two possibilities can occur:  
\begin{enumerate}
\item[(i)] $L(f_\p,1)\neq 0$, $L(f_\p\otimes\epsilon_K,1)=0$ and $L'(f_\p\otimes\epsilon_K,1)\neq0$.
\item[(ii)] $L(f_\p,1)=0$, $L(f_\p\otimes\epsilon_K,1)\neq0$ and $L'(f_\p,1)\neq0$.
\end{enumerate}
Suppose that we are in case (i). By \cite[Theorem 7]{Ho2} applied to the Hida family $\phi_\infty$, the form $f_{\p'}$ has analytic rank zero and $\widetilde H^1_f(\Q,V_{\p'}^\dagger)=0$ for all but finitely many $\p'\in\Gen_2(\theta)$. On the other hand, by \cite[Theorem 8]{Ho2} applied to the Hida family $\gamma_\infty$, the form $f_{\p'}\otimes\epsilon_K=g_{\fP'}$ has analytic rank one and $\widetilde H^1_f(\Q,V_{\p'}^\dagger\otimes\epsilon_K)\simeq\widetilde H^1_f(\Q,W_{\fP'}^\dagger)$ is one-dimensional over $F_{\p'}=F'_{\fP'}$ for all but finitely many $\fP'\in\Gen_2(\theta')$ (see the proof of \cite[Corollary 3.4.3]{Ho1} with $\Q$ in place of $K$). Finally, Proposition \ref{prop5.8} implies that $\widetilde H_f^1(K,V^\dagger_{\p'})$ is one-dimensional for almost all $\p'$, and the theorem follows by arguing as in the final part of the proof of \cite[Corollary 3.4.3]{Ho1}. Case (ii) is symmetric to (i) and is left to the reader. \end{proof}

\end{document}